\documentclass[12pt]{amsart}
\usepackage{color}
\usepackage[all,cmtip]{xy}
\usepackage{amssymb}
\usepackage{amsmath}
\usepackage{amsthm}
\usepackage{amscd}
\usepackage{amsfonts}
\usepackage{graphicx}
\usepackage{multicol}
\usepackage{appendix}

\setlength{\textwidth}{16truecm}
\setlength{\textheight}{22.5truecm}

\calclayout

\newtheorem{dummy}{anything}[section]
\newtheorem{theorem}[dummy]{Theorem}

\newtheorem{lemma}[dummy]{Lemma}
\newtheorem{proposition}[dummy]{Proposition}
\newtheorem{corollary}[dummy]{Corollary}

\theoremstyle{definition}
\newtheorem{definition}[dummy]{Definition}

  \newtheorem{remark}[dummy]{Remark}
   
    \newtheorem{question}[dummy]{Question}

  


\newcommand{\cAb}{\mathcal Ab}

\newcommand{\cF}{\mathcal F}

\newcommand{\cH}{\mathcal H}
\newcommand{\cI}{\mathcal I}

\newcommand{\cL}{\mathcal L}

\newcommand{\cTop}{\mathcal Top}

\newcommand{\bbC}{\mathbb C}
\newcommand{\bbD}{\mathbb D}

\newcommand{\bbH}{\mathbb H}

\newcommand{\bbQ}{\mathbb Q}
\newcommand{\bbR}{\mathbb R}
\newcommand{\bbS}{\mathbb S}
\newcommand{\bbZ}{\mathbb Z}

\DeclareMathOperator{\Hom}{Hom} 
\DeclareMathOperator{\Aut}{Aut}
 
\DeclareMathOperator{\Ext}{Ext}

\DeclareMathOperator{\Res}{Res}

\DeclareMathOperator{\Map}{Map}

\DeclareMathOperator{\ind}{Ind}

 \DeclareMathOperator{\Iso}{Iso}

\newcommand{\iso}{\cong}
\newcommand{\G}{\Gamma}

\DeclareMathOperator{\Rep}{Rep}
\DeclareMathOperator{\Or}{Or}

\newcommand{\la}{\langle}
\newcommand{\ra}{\rangle}

\newcommand{\bd}{\partial}
\newcommand{\id}{\mathrm{id}}

\def\maprt#1{\smash{\,\mathop{\longrightarrow}\limits^{#1}\,}}

\begin{document}

\title{Constructing homologically trivial actions on products of spheres}

\author{\" Ozg\" un \" Unl\" u and Erg{\" u}n Yal\c c\i n }

\address{Department of Mathematics, Bilkent
University, Ankara, 06800, Turkey.}

\email{unluo@fen.bilkent.edu.tr \\
yalcine@fen.bilkent.edu.tr}

\keywords{}

\thanks{2010 {\it Mathematics Subject Classification.} Primary: 57S25; Secondary: 55R91.} \thanks{Both of the authors are partially supported by T\" UB\. ITAK-TBAG/110T712.}

\date{\today}

\begin{abstract}
We prove that if a finite group $G$ has a representation with fixity $f$, then it acts freely and homologically trivially on a finite CW-complex homotopy equivalent to a product of $f+1$ spheres. This shows, in particular, that every finite group acts freely and homologically trivially on some finite CW-complex homotopy equivalent to a product of spheres. 
\end{abstract}

\maketitle

\section{Introduction}
\label{section:Introduction}
It is known that every finite group acts freely on a product of spheres $\bbS ^{n_1}\times \cdots \times \bbS ^{n_k}$ for some $n_1, n_2, \dots, n_k$. This follows from a construction given in \cite[page 547]{oliver} which is attributed to J. Tornehave by B. Oliver. The construction is based on a simple idea that one can permute the spheres in a product to get smaller isotropy. More specifically, for a finite group $G$, one defines the $G$-space $X$ as a product of $G$-spaces $\Map _{\la g \ra } (G, \bbS (\rho_g))$ over all elements $g\in G$, where $\bbS (\rho_g)$ denotes the unit sphere of a nontrivial one-dimensional complex representation $\rho_g: \la g \ra \to \bbC$. Note that because of the way $X$ is constructed, $G$ acts freely on $X$, but the induced action of $G$ on the homology of $X$ is not a trivial action in general. It is interesting to ask if there exists a similar construction so that the induced action on homology is trivial. The following was stated as a problem by J. Davis in the Problem Session of Banff 2005 conference on Homotopy Theory and Group Actions:

\begin{question}\label{ques:jdavis} Does every finite group act freely  on some product of spheres with trivial action on homology?
\end{question}

To find a free, homologically trivial action of a finite group $G$ on a product of spheres, one may try to take a family of $G$-spheres $\bbS^{n_i}$ for $i=1,\dots, k$ and let $G$ act on the product $\bbS^{n_1}\times \cdots \times \bbS^{n_k}$ by  diagonal action.  An action which is obtained in this way is called a {\it product action} in general and a {\it linear action} if each $G$-sphere in the product is a unit sphere of a representation. By construction, product actions are homologically trivial if the action on each sphere is homologically trivial but not every finite group has a free product action on a product of spheres. For example, it is known that the alternating group $A_4$ cannot act freely on a product of spheres by a product action. This follows from a result of Oliver \cite[Theorem 1]{oliver} which says that $A_4$ does not act freely on any product of equal dimensional spheres with trivial action on homology. On the other hand, it can be shown that $A_4$ acts freely on $\bbS^2 \times \bbS^3$ with trivial action on homology.  So, one cannot answer the above question affirmatively by considering only the product actions.

The situation with $A_4$ is not an exceptional case. For example, if one searches through the characters of a finite group $G$  and tries to see when $G$ has a family of characters $\chi _1, \dots, \chi _k$ such that $G$ acts freely on $\bbS (\chi _1) \times \cdots \times \bbS (\chi _k )$, then one notices that not many groups have such a family of characters. In fact,  Urmie Ray \cite{ray} showed that finite groups which have free linear actions are very rare: If a finite group $G$ has a free linear action on a product of spheres, then all  nonabelian simple sections of $G$ are isomorphic to $A_5$ or $A_6$.

In this paper we attack the problem stated above using some more recent construction methods that were developed to study the rank conjecture. In particular, we use some ideas from our earlier papers \cite{unlu-yalcin1} and \cite{unlu-yalcin2} where we constructed free actions on products of spheres  for finite groups which have  representations with small fixity and for $p$-groups with small rank. Fixity of a representation $V$ of a finite group $G$ over a field $F$ is defined as the maximum of dimensions $\dim_{F} V^g $ over all elements $g \in G$. If $\rho : G \to U(n)$ is a faithful complex representation with fixity $f$, then $G$ acts freely on the space $X=U(n)/U(n-f-1)$. For small values of $f$, one can modify the space $X$ to obtain a free action on a product of $f+1$ spheres (see \cite{adem-davis-unlu} and \cite{unlu-yalcin1}). In this paper we improve this result to all values of fixity for actions on finite CW-complexes homotopy equivalent to product of spheres.

\begin{theorem}\label{thm:main}
Let $G$ be a finite group. If $G$ has a faithful complex representation with fixity $f$, then $G$ acts freely on a finite complex $X$ homotopy equivalent to a product of $f+1$ spheres with trivial action on homology.
\end{theorem}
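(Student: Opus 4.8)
The plan is to build the desired complex as the total space of a tower of $f+1$ equivariant spherical fibrations
$$ X = X_{f+1} \longrightarrow X_f \longrightarrow \cdots \longrightarrow X_1 \longrightarrow X_0 = \mathrm{pt}, $$
in which each $X_i \to X_{i-1}$ has fiber a homotopy sphere, so that $X_i$ is homotopy equivalent to a product of $i$ spheres, and so that the family of isotropy subgroups shrinks at every stage until the action on $X = X_{f+1}$ is free. The bookkeeping parameter is the complex dimension of fixed subspaces of the given faithful representation $V$: I would arrange that every nontrivial isotropy subgroup $H$ occurring in $X_i$ satisfies $\dim_{\bbC} V^H \le f - i$, using $V$ as the source of all the bundle data. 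Since fixity $f$ means $\dim_{\bbC} V^H \le f$ for every nontrivial $H$, after $i = f+1$ stages a nontrivial $H$ would need $\dim_{\bbC} V^H \le -1$, which is impossible; hence the action on $X$ is free, and the count of $f+1$ fibrations matches the $f+1$ spheres in the statement.

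For the inductive step I would, over $X_{i-1}$, take the unit-sphere fibration of a bundle assembled from $V$, with fibers modeled on spheres of fixed subspaces of $V$, chosen so that on each fixed set $X_{i-1}^H$ with $\dim_{\bbC} V^H$ currently maximal the newly added sphere carries a fixed-point-free $H$-action; this removes $H$ from the isotropy and lowers the bound by one. Crucially these must be genuine spherical fibrations rather than linear sphere bundles of representations: restricting to linear spheres would produce a product action, and by Ray's theorem free product actions exist for almost no groups, so the construction must leave the linear world. Homological triviality would be maintained by working throughout with oriented fibrations and checking that $G$ preserves the chosen fiber orientation, so that the associated spectral sequences have untwisted coefficients and $G$ acts trivially on $H_*(X)$; this is automatic for the linear pieces, which come from the connected group $U(n)$, and has to be arranged for the non-linear corrections.

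The technical heart, and the step I expect to be the main obstacle, is realizing each stage as an honest $G$-CW complex whose fiber is a genuine homotopy sphere and whose total space is homotopy equivalent to a product of spheres, rather than merely a cohomology product of spheres such as a Stiefel manifold. I would phrase the existence and the fiberwise extension of these fibrations as a problem in equivariant obstruction theory over the orbit category of $G$, where the obstruction and difference classes live in Bredon cohomology, i.e.\ in $\Ext$ groups of suitable coefficient systems. The bound $\dim_{\bbC} V^H \le f - i$ controls the dimensions of all the fixed sets and hence the degrees in which these obstructions can be nonzero, which is exactly the input needed to make them vanish or be correctable in the relevant range; this uniform dimension control coming from fixity is what should upgrade the earlier ``small $f$'' constructions to arbitrary $f$.

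Finally I would attend to finiteness: each stage should be carried out over a finite $G$-CW model, and one must verify that the resulting $X$ is genuinely a finite complex (controlling, if necessary, a Wall-type finiteness obstruction), after which $X/G$ is the finite complex realizing the free, homologically trivial action on a homotopy product of $f+1$ spheres.
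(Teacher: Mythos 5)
The central gap is your mechanism for killing the obstructions. You assert that the fixity bound controls ``the degrees in which these obstructions can be nonzero'' and hence makes them vanish, but that is false and it is not what happens. When one extends a spherical $G$-fibration over the base cell by cell, the obstruction met at an $(n+1)$-cell of orbit type $G/H$ is a class in $\pi_{n-1}\bigl(\Aut_H(\bbS(V_H))\bigr)$, the homotopy of the monoid of $H$-self-homotopy equivalences of the fiber sphere; here $n$ ranges up to the dimension of the base, which is a product of $i-1$ high-dimensional spheres (whose fixed sets also become high-dimensional after stabilization), so nothing in sight is bounded in terms of $f$. Moreover these coefficient groups are, like $\pi_*(\Aut(\bbS^m))$, nonzero in arbitrarily high degrees, so no dimension count coming from fixity can force the obstructions to vanish. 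What the paper actually does --- and this is its technical heart, absent from your proposal --- is: (1) prove via M\o ller's equivariant Federer spectral sequence that $\pi_n\bigl(\Aut_G(\bbS(V^{\oplus k}))\bigr)$ is \emph{finite} for all sufficiently large $k$ (Theorem \ref{thm:finiteness of homotopy}); and (2) exploit that an $N$-fold fiber join multiplies the classifying class by $N$ (Lemma \ref{lem:joinswap}), so joining $N=$ (the order of this finite group) copies trivializes the restriction of the fibration to each attaching sphere (Lemma \ref{lem:jointrivial}), which is exactly what permits the skeleton-by-skeleton extension (Proposition \ref{pro:mainconsttool}). Without this fiber-join/finiteness mechanism, your induction cannot be carried out.

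Your isotropy bookkeeping also runs in the wrong direction. If over a point of $X_{i-1}$ with isotropy $H$ you place a sphere built from $V$ on which $H$ acts without fixed points --- in practice $\bbS\bigl((V^H)^{\perp}\bigr)$ --- then the isotropy groups created in the total space over that point are exactly the $K\le H$ with $V^K\cap (V^H)^{\perp}\neq 0$, i.e.\ those with $\dim_{\bbC}V^K > \dim_{\bbC}V^H$: the subgroups with \emph{larger} fixed subspaces are the ones that survive. So the invariant that propagates is a growing lower bound ($H\in\Iso(X_i)$ implies $\dim_{\bbC} V^H\ge i$), not your shrinking upper bound $\dim_{\bbC}V^H\le f-i$, which is not preserved by your own inductive step. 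The paper encodes the correct invariant as $V_i(W)^H\neq\emptyset$ for all $H\in\Iso(X_i)$, where $V_i(W)$ is the Stiefel manifold of $i$-frames, and it passes from the complex representation to its quaternionification precisely so that these fixed sets are empty or simply connected; that is what yields the initial $G$-map $X_i^{(2)}\to V_i(W)$ on the $2$-skeleton and, by pulling back the frame bundle, the compatible family of fiber spheres. Freeness at stage $f+1$ then follows because a nontrivial subgroup cannot fix an $(f+1)$-frame when the fixity is $f$. Your remaining points --- homological triviality via orientability (cf.\ Lemma \ref{lem:homtrivial}) and finiteness of the models (cf.\ Proposition \ref{pro:finiteness}, where no Wall-type obstruction is needed) --- are in the right spirit, but the two issues above are essential.
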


We prove this theorem using a recursive method for constructing free actions. This method involves the construction of a $G$-equivariant spherical fibration $p : E \to X$ over a given finite $G$-CW-complex $X$ in each step. We require that the total space $E$ is also homotopy equivalent to a finite $G$-CW-complex and that the $G$-action on $E$ has smaller isotropy than the $G$-action on $X$. Once they are constructed, by taking fiber joins these fibrations can be replaced by $G$-fibrations which are non-equivariantly homotopy equivalent to trivial fibrations. This gives a $G$-action on a finite CW-complex $Y$ homotopy equivalent to $X \times \bbS^N$ for some $N>0$.  Using this method, after some steps, one gets a free action on a product of spheres. This method was first developed by Connolly and Prassidis \cite{connolly-prassidis} and later used also in \cite{adem-smith} and \cite{unlu-thesis}.

Since every finite group has a faithful complex representation, every finite group has a complex representation with fixity $f$ for some positive integer $f$. Hence, as a corollary of Theorem \ref{thm:main} we obtain an affirmative answer to Question \ref{ques:jdavis} in homotopy category.

\begin{corollary}\label{cor:main}
Every finite group acts freely and homologically trivially on some finite CW-complex $X$ homotopy equivalent to a product of spheres
\end{corollary}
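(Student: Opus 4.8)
The plan is to deduce the corollary directly from Theorem~\ref{thm:main}, so the only work is to produce, for an arbitrary finite group $G$, a faithful complex representation and to observe that its fixity is a well-defined nonnegative integer. Once that input is in place, the corollary is a pure specialization of the theorem, with no further geometric or homotopy-theoretic content required.

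First I would recall the standard fact that every finite group admits a faithful complex representation. The most economical choice is the regular representation $\bbC[G]$: the group $G$ permutes the standard basis $\{e_h \mid h \in G\}$ simply transitively, so any $g \neq 1$ sends $e_1$ to $e_g \neq e_1$ and hence acts nontrivially. Thus the permutation action is faithful, and $\bbC[G]$ is a faithful complex representation of $G$. Next I would check that the fixity $f = \max_{g \in G} \dim_{\bbC} V^g$ of any faithful finite-dimensional complex representation $V$ is well defined: since $G$ is finite and $V$ is finite-dimensional, the maximum ranges over a finite set of nonnegative integers, so $f$ exists and is finite. Because $V$ is faithful, no nontrivial $g$ fixes all of $V$, so in fact $0 \le f < \dim_{\bbC} V$.

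Finally, applying Theorem~\ref{thm:main} to $G$ equipped with this faithful representation of fixity $f$ produces a free, homologically trivial $G$-action on a finite complex $X$ homotopy equivalent to a product of $f+1$ spheres. Since such a space is in particular a finite CW-complex homotopy equivalent to a product of spheres, this is exactly the assertion of the corollary, which completes the argument.

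As for the main obstacle: there is essentially none beyond the content already packaged into Theorem~\ref{thm:main}. All of the genuinely difficult work — the recursive construction of $G$-equivariant spherical fibrations and the fiber-join argument reducing isotropy while preserving homological triviality — has been absorbed into the theorem. The corollary merely records that the hypothesis of the theorem (possession of a faithful representation, hence of \emph{some} finite fixity) is satisfied by every finite group, so the only points to verify are the two elementary facts above.
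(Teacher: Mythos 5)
Your proof is correct and follows exactly the paper's own reasoning: the paper also deduces the corollary from Theorem~\ref{thm:main} by noting that every finite group has a faithful complex representation and hence a representation of some finite fixity $f$. Your additional details (using the regular representation $\bbC[G]$ and checking $0 \le f < \dim_{\bbC} V$) are a harmless elaboration of the same argument.
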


The paper is organized as follows: In Section
\ref{sect:EquivariantFibrations}, we introduce $G$-fibrations and discuss the effects of taking fiber joins of $G$-fibrations. In Section \ref{sect:Federer}, we discuss the equivariant Federer spectral sequence introduced by M\o ller \cite{moller} and using it, we give another proof for a theorem  by M. Klaus \cite{klaus} (see Theorem \ref{thm:finiteness of homotopy}).  In Section \ref{sect:construction}, we introduce our main construction method, and finally in Section \ref{sect:mainthm}, we prove Theorem \ref{thm:main}.

\section{$G$-fibrations}
\label{sect:EquivariantFibrations}

In this section, we first give some preliminaries on $G$-fibrations and then prove some lemmas on fiber joins of $G$-fibrations. For more details on this material we refer the reader to \cite{lueck} and \cite{waner2}. Some of this material also appears in \cite{connolly-prassidis}, \cite{guclukanthesis}, \cite{klaus}, and \cite{unlu-thesis}.

\begin{definition}\label{defn:Gfibration} A $G$-fibration is a $G$-map $p: E \to B$ which satisfies the following homotopy lifting property for every $G$-space $X$: Given a commuting diagram of $G$-maps 
$$\xymatrix{
X \times \{ 0\} \ar[d] \ar[r]^-{h}
& E \ar[d]^{p}  \\
X \times I \ar[r]^-{H}
&  B,}\\ $$
there exists a $G$-map $\widetilde H: X\times I \to E$ such that $\widetilde H |_{X\times \{0\}}=h$ and $p \circ \widetilde H =H$.
\end{definition}

Given a $G$-fibration $p:E\to B$ over $B$, the isotropy subgroup $G_b\leq G$ of a point $b\in B$ acts on the fiber space $F_b:=p^{-1} (b)$. So, $F_b$ is a $G_b$-space. Let us denote the set of isotropy subgroups of the $G$-action on $B$ by $\Iso (B)$.

\begin{definition} Let $\{ F_H \}$ denote a family of $H$-spaces over all $H \in \Iso (B)$. If for every $b \in B$, the fiber space $F_b$ is $G_b$-homotopy equivalent to $F_{G_b}$, then $p:E \to B$ is said to have {\it fiber type} $\{ F_H \}$. 
\end{definition}

Note that in general a $G$-fibration does not have to have a fiber type, i.e., for $b_1, b_2 \in B$ with $G_{b_1}=G_{b_2}=H$, it may happen that $F_{b_1}$ and $F_{b_2}$ are not $H$-homotopy equivalent. But throughout the paper we only consider $G$-fibrations which have a fiber type. Observe that if $p: E \to B$ is a $G$-fibration such that $B^H$ is path connected for every $H \in \Iso(B)$, then $p$ has a fiber type since for every $b_1, b_2\in B^H$, the fiber spaces $F_{b_1}$ and $F_{b_2}$ are $H$-homotopy equivalent by a standard argument in homotopy theory. In our applications the $G$-fibrations that we construct will often satisfy this connectedness property. 

If $p : E \to B$ is a $G$-fibration with fiber type $\{ F_H \}$ such that for all $H \in \Iso (B)$ the fixed point space $B^H$ is connected, then the family  $\{ F_H\}$ satisfies a certain compatibility condition. To see this, let $H , K \in \Iso (B)$ such that $K^g \leq H$ for some $g \in G$. Then, we have $gB^H \subseteq B^K$, so by the connectedness of $B^K$, we obtain that for every $b \in B^H$, the $K$-space $gF_b$ is $K$-homotopy equivalent to $F_K$. This means that the $K$-spaces $\Res _ K g^* F_H$ and $F_K$ are $K$-homotopy equivalent for all $H\in \Iso (B)$ where $\Res _K g^* F_H$ is the space $F_H$ which is considered as a $K$-space through the map $K \to H$ defined by $k \to g^{-1} kg$. 
 
\begin{definition}\label{def:compatiblefamily} Let $\cH$ be a family of subgroups of $G$ closed under conjugation. A family of $H$-spaces $\{F_H\}$ over all $H \in \cH$ is called a {\it compatible family} of $H$-spaces if for every $H, K \in \cH$ with $K^g \leq H$ for some $g \in G$, the $K$-space $\Res _K g^* F_H$ is  $K$-homotopy equivalent to $F_K$, where $\Res _K g^* F_H$ is the the space $F_H$ considered as a $K$-space through the map $K \to H$ defined by conjugation $k \to g^{-1} k g$. 
\end{definition}

The main aim of this section is to introduce some tools for construction of $G$-fibrations with fiber type $\{ F_H\}$ for a given compatible family $\{ F_H\}$.  We first introduce some more terminology: 

Given two $G$-fibrations $p_1:E_1 \to B$ and $p_2: E_2 \to B$ over the same $G$-space $B$, a $G$-map $f: E_1\to E_2$ is called a fiber preserving map if it satisfies $p_2\circ f =p_1$. Two fiber preserving $G$-maps $f, f': E_1\to E_2$ are said to be $G$-fiber homotopic if there is a $G$-map $H: E_1\times I \to E_2$ which is fiber preserving at each $t\in I$ such that $H(x, 0)=f(x)$ and $H(x,1)=f'(x)$ for all $x\in E_1$. We say two $G$-fibrations $p_1 :E_1 \to B$ and $p_2: E_2 \to B$ are $G$-fiber homotopy equivalent if there are fiber preserving $G$-maps $f_1:E_1\to E_2$ and $f_2: E_2 \to E_1$ such that $f_1\circ f_2$ and $f_2 \circ f_1$ are $G$-fiber homotopic to identity maps. 

For an $H$-space $F_H$, let $\Aut _H (F_H)$ denote the topological monoid of self $H$-homotopy equivalences of $F_H$. Note that $\Aut _H (F_H)$ is not a connected space in general   but it is easy to show that all its components have the same homotopy type. When we need to choose a component, we often take the connected component which includes the identity map. We denote this component by $\Aut ^I _H (F_H)$. 

Since $\Aut _H (F_H)$ is a monoid, the usual construction of classifying spaces for monoids applies, and we get a universal fibration $E\Aut _H (F_H) \to B\Aut _H (F_H)$ with fiber $\Aut_H (F_H)$. From this one also obtains a fibration $$ F_H \to E_H \to B\Aut _H (F_H)$$ where $E_H =E\Aut_H (F_H) \times _{\Aut_H (F_H)} F_H$. This is actually an $H$-fibration with trivial  $H$-action on the base space. It turns out that this fibration is a universal fibration for all $H$-equivariant fibrations with trivial action on the base space.

\begin{theorem}\label{thm:classification} Let $H$ be a finite group, $F_H$ be a finite $H$-CW-complex, and $B$ be a CW-complex with trivial $H$-action. Then, there is a one-to-one correspondence between $H$-fiber homotopy classes of $H$-fibrations over $B$ with fiber $F_H$ and the set of homotopy classes of maps $B \to B\Aut _H (F_H )$. The correspondence is given by taking the pullback of the universal $H$-fibration described above via the map $f:B \to B\Aut_H(F_H)$.
\end{theorem}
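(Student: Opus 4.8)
The plan is to run the classical classification of fibrations by the classifying space of the monoid of self-homotopy equivalences (Dold, Stasheff, May) in the equivariant category, exploiting the crucial hypothesis that $H$ acts trivially on $B$. Because of this triviality, an $H$-fibration $p:E\to B$ with fiber $F_H$ is nothing but a fibration \emph{internal to the category of $H$-spaces}: the total space $E$ is an $H$-space, the projection $p$ is $H$-invariant, each fiber $F_b$ is an $H$-space that is $H$-homotopy equivalent to $F_H$, and the covering homotopy property of Definition \ref{defn:Gfibration} holds $H$-equivariantly. Thus the structure monoid of $p$ is exactly $\Aut_H(F_H)$, and the whole classical argument should go through verbatim after replacing everywhere ``space'', ``map'', and ``homotopy equivalence'' by ``$H$-space'', ``$H$-map'', and ``$H$-homotopy equivalence''.

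Concretely, I would first check that the pullback construction gives a well-defined map
$$\Phi:[B,\,B\Aut_H(F_H)] \longrightarrow \{\,H\text{-fiber homotopy classes of }H\text{-fibrations over }B\text{ with fiber }F_H\,\}.$$
That $\Phi$ is defined on homotopy classes follows from the equivariant covering homotopy property: $H$-homotopic maps $f_0\simeq f_1:B\to B\Aut_H(F_H)$ pull the universal $H$-fibration $F_H\to E_H\to B\Aut_H(F_H)$ back to $H$-fiber homotopy equivalent fibrations. To construct the inverse of $\Phi$, I would associate to an $H$-fibration $p:E\to B$ its \emph{principal} $\Aut_H(F_H)$-fibration $\mathrm{Prin}(p)\to B$, whose fiber over $b$ is the space of $H$-homotopy equivalences $F_H\to F_b$, with $\Aut_H(F_H)$ acting by precomposition. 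Two things then have to be verified: that $\mathrm{Prin}(p)\to B$ is classified by a map $c_p:B\to B\Aut_H(F_H)$, well defined up to homotopy, and that forming the associated fibration recovers $p$, i.e.\ there is an $H$-fiber homotopy equivalence $E\simeq \mathrm{Prin}(p)\times_{\Aut_H(F_H)}F_H$. The assignment $p\mapsto c_p$ is then inverse to $\Phi$.

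For the classification of principal $\Aut_H(F_H)$-fibrations by $[B,B\Aut_H(F_H)]$ I would use the two-sided bar construction already invoked in the paragraph preceding the statement, together with the Dold--Lashof argument upgrading the canonical map to a classifying bijection. The hypotheses that $H$ is finite and that $F_H$ is a \emph{finite} $H$-CW-complex are used here to guarantee that $\Aut_H(F_H)$, and the fiberwise spaces of $H$-equivalences, have the homotopy type of CW-complexes, so that the bar-construction machinery applies; this is where the technical input from \cite{waner2} and \cite{lueck} is needed. Injectivity of $p\mapsto c_p$ (equivalently, injectivity of $\Phi$) follows by the standard trick: an $H$-fiber homotopy equivalence between $p_0$ and $p_1$ over $B$ extends to an $H$-fibration over $B\times I$ restricting to $p_0$ and $p_1$ at the two ends, whose classifying map is a homotopy between $c_{p_0}$ and $c_{p_1}$.

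I expect the main obstacle to be the construction and analysis of $\mathrm{Prin}(p)$: showing that the spaces of $H$-homotopy equivalences between fibers assemble into a genuine principal $\Aut_H(F_H)$-fibration (first as a quasifibration, then rigidified), and that the associated-bundle reconstruction really recovers $p$ up to $H$-fiber homotopy. This is the equivariant form of Dold's theorem that a fiberwise map inducing equivalences on all fibers is a fiber homotopy equivalence. Because $H$ acts trivially on $B$ and $H$ is finite, each of these statements reduces to its non-equivariant counterpart applied within the category of $H$-spaces and $H$-maps; the real content is checking the equivariant covering homotopy property and the CW-homotopy-type hypotheses on the relevant $H$-mapping spaces, after which the classical proofs carry over without change.
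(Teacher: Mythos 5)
The paper does not actually prove this theorem itself: it defers to G\"u\c{c}l\"ukan's thesis, whose argument is (as the paper states) an equivariant adaptation of Stasheff's classification of Hurewicz fibrations --- precisely the route you outline, with the pullback map, the principal $\Aut_H(F_H)$-fibration as inverse, the equivariant Dold theorem, the bar/Dold--Lashof construction, and the CW-homotopy-type hypotheses on $\Aut_H(F_H)$ supplied by the finiteness of $H$ and of $F_H$. Your proposal is therefore correct in outline and takes essentially the same approach as the proof the paper cites; the technical points you flag (rigidifying quasifibrations, equivariant CW-type of mapping spaces) are exactly the ones handled in detail in that reference.
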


This theorem is proved in \cite{guclukanthesis} in full detail. The proof is based on the proof of Stasheff's theorem on the classification of Hurewicz fibrations \cite{stasheff}. More general versions of this theorem also appear in \cite{french} and \cite{waner2}. 

Also note that, as in the case of orientable vector bundle theory, we can give an orientable version of the classification of $H$-fibrations over a trivial $H$-space $B$.  If $p:E \to B$ is an $H$-fibration with fiber $F_H$ over a trivial $H$-space, then there is a natural group homomorphism $\chi: \pi_1 (B)\to \pi_0 (\Aut _H (F_H))=\mathcal{E}_H (F_H)$ where $\mathcal{E}_H(F_H)$ denotes the group of homotopy classes of self $H$-homotopy equivalences of $F_H$. If this homomorphism is trivial, then we call the $H$-fibration $p$ a {\it homotopy orientable} $H$-fibration. This notion of orientability is stronger than usual notion of orientable fibration where one only requires the action on the homology of $F_H$ to be trivial (see \cite{ehrlich}).  Note that an $H$-fibration is homotopy orientable if and only if its classifying map $f: B \to B\Aut _H (F_H)$ lifts to a map 
$\tilde f : B \to B \Aut ^{I} _H (F_H )$. Also note that homotopy orientable fibrations are classified (as homotopy orientable fibrations) by the homotopy classes of maps $[B , B\Aut _H ^I (F_H)]$. We will be using these facts later in the proof of Lemma \ref{lem:jointrivial}.
 
In the rest of this section we focus on the fiber join construction performed on $G$-fibrations. We use fiber joins to kill obstructions
that occur in the construction of $G$-fibrations. The fiber join of two $G$-fibrations is defined in the following way:
Let $p_1: E_1\to B$ and $p_2: E_2\to B$ be two fibrations. We define a
fibration $E_1\times _B E_2$ and maps
$E_1\times _B E_2\to E_i$ for $i=1,2$ by the following pullback diagram
$$\xymatrix{
E_1\times _B E_2 \ar[r]\ar[d] & E_{1}  \ar[d]^{p_{1}} \\
E_2 \ar[r]^{p_2}             &  B . }$$
Then the $G$-space $E_1*_B E_2$ is defined as the homotopy pushout of the following diagram
$$\xymatrix{
E_1\times _B E_2 \ar[r]\ar[d] & E_{1}  \ar[d] \\
E_2 \ar[r] & E_1*_B E_2.  }$$
By the universal property of homotopy pushouts we get a $G$-fibration $$p_1*p_2:E_1*_B E_2\to B$$ called
the {\it fiber join of $p_1$ and $p_2$}.
Iterating this construction, we obtain a $G$-fibration $$\underset{k}{\ast} \, p: \underbrace{E*_B E*_B
\dots *_B E}_{k\text{-many}}\to B$$ which we call the $k$-fold
fiber join of $p$ with itself. Note that if $p$ is a $G$-fibration with fiber type $\{F_H\}$, then the fiber type of the $k$-fold join $\ast _k p$ is $\{\ast _k F_H\}$. If $B$ has trivial $H$-action, then the $k$-fold join is classified by a map $B \to B \Aut _H (\ast _k F_H)$. We would like to explain this map in terms of the classifying map of $p$. For this, observe  that there is a monoid homomorphism 
$$\varphi : \Aut_H (F_H) \times \cdots \times \Aut _H (F_H)  \to  \Aut_H (\ast_k F_H)$$ defined by 
 $$\varphi (a_1,\dots , a_k) (x_1 t_1, \dots ,x_k t_k)=(a_1(x_1) t_1 , \dots , a_k (x_k) t_k )$$ for every $x_1, \dots,  x_k \in F_H$ and $t_1, \dots , t_k \in [0,1]$ with $\sum_i t_i=1$. We have the following lemma:

\begin{lemma}\label{lem:fiberjoins} Let $H$ be a finite group, $F_H$ be a finite $H$-CW-complex, and $B$ be a CW-complex with trivial $H$-action. If $p: E\to B$ is a $H$-fibration with fiber type $F_H$ whose classifying map is $f: B \to B\Aut_H (F_H)$, then the classifying map of the $G$-fibration $\ast _k \, p$ is given by the composition 
\[\xymatrix@C=3pc{B \ar[r]^-{f\times \cdots \times f} & B\Aut_H(F_H)\times \cdots \times B\Aut _H (F_H) \ar[r]^-{B\varphi} & B \Aut_H (\ast _k F_H )}\]
where $B\varphi$ is the map induced from the monoid homomorphism $\varphi$ defined above. 
\end{lemma}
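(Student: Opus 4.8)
The plan is to prove this by appealing to the classification Theorem \ref{thm:classification} together with the naturality of the pullback construction, so that the entire statement reduces to checking that the $k$-fold fiber join operation is modeled, at the level of universal fibrations, by the monoid homomorphism $\varphi$. The key observation is that both sides of the claimed equality classify the \emph{same} $H$-fibration over $B$, namely $\ast_k\, p$; since Theorem \ref{thm:classification} gives a bijection between $H$-fiber homotopy classes of $H$-fibrations over $B$ with fiber $\ast_k F_H$ and homotopy classes of maps $B \to B\Aut_H(\ast_k F_H)$, it suffices to exhibit an $H$-fiber homotopy equivalence between the fibration classified by $B\varphi \circ (f\times\cdots\times f)$ and the join $\ast_k\, p$.

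First I would set up the universal situation. The product map $f\times\cdots\times f : B \to \prod_k B\Aut_H(F_H)$ classifies the external product fibration, whose fiber over a point is $F_H \times \cdots \times F_H$ with the diagonal-type structure group $\prod_k \Aut_H(F_H)$; because $B$ carries the trivial $H$-action and $f$ classifies $p$, this external product fibration pulls back along the diagonal to the fiberwise product $E \times_B \cdots \times_B E$. Next I would analyze the effect of $B\varphi$: applying the classifying-space functor to the monoid homomorphism $\varphi$ converts the structure group $\prod_k \Aut_H(F_H)$ into $\Aut_H(\ast_k F_H)$, and the associated fiber bundle construction replaces each fiber $F_H \times \cdots \times F_H$ by the join $\ast_k F_H = F_H * \cdots * F_H$ via the explicit formula for $\varphi$ given before the statement. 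The content here is that the Borel construction $E\Aut_H(\ast_k F_H) \times_{\Aut_H(\ast_k F_H)} (\ast_k F_H)$, pulled back along $B\varphi$, agrees up to $H$-fiber homotopy with the fiberwise join of the universal fibrations; this is precisely the statement that $\varphi$ is the structure-group-level incarnation of the fiber join.

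The main step, and the place where care is required, is to verify that this fiberwise join assembled from $\varphi$ coincides with the join $\ast_k\, p$ defined earlier as an iterated homotopy pushout of pullback diagrams. In other words, I must check that forming the join of fibers \emph{pointwise} (which is what the structure-group description via $\varphi$ produces) agrees with the global homotopy-pushout definition of $E *_B \cdots *_B E$. This is a compatibility between the two descriptions of the join construction, and it follows because the homotopy pushout defining the fiber join is computed fiberwise over $B$: over each point $b\in B$ the pushout reduces to the ordinary join $F_b * \cdots * F_b$, and the structure group acting on this join is exactly the image of $\varphi$. Since all the maps in sight are $H$-equivariant and fiber preserving, and since $B$ has trivial $H$-action so that Theorem \ref{thm:classification} applies verbatim to $\ast_k F_H$ as an $H$-CW-complex, the two classifying maps must be homotopic.

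I expect the main obstacle to be the bookkeeping in identifying the pointwise join with the global homotopy pushout in an $H$-equivariant and fiber-preserving way, rather than any deep difficulty; one must be slightly careful that the join $\ast_k F_H$ is again a finite $H$-CW-complex so that the hypotheses of Theorem \ref{thm:classification} are genuinely met, and that the monoid homomorphism $\varphi$ is continuous and compatible with the $H$-action, so that $B\varphi$ is well defined. Once these checks are in place, the result follows formally from the naturality of the classification bijection.
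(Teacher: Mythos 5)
Your proposal takes essentially the same route as the paper's proof: you identify the fibration classified by $B\varphi$ over $BA=\prod_{k} B\Aut_H(F_H)$ with the associated-bundle fibration $EA\times_A(\ast_k F_H)$, where $A=\prod_{k}\Aut_H(F_H)$ acts through $\varphi$, and then use the compatibility of the fiber-join (homotopy pushout) construction with base change to recognize its pullback along $f\times\cdots\times f$ as $\ast_k\, p$. The paper's proof compresses exactly these two steps, citing ``standard properties of homotopy pushout diagrams'' where you invoke the fiberwise description of the join, so the two arguments coincide in substance.
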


\begin{proof} Let $A=\prod _{i=1}^k \Aut_H  (F_H)$. By standard properties of homotopy pushout diagrams, we observe that the fibration $\ast _k \, p$ is the pullback fibration of the fibration $$ q: EA \times _A (\ast _k F_H ) \to BA $$ via the map $\prod _i  f: B \to BA$. Note that $A$ acts on $\ast_k F_H$ via the monoid homomorphism $\varphi$, so the classifying map of $q$ is $B\varphi$. This completes the proof.
\end{proof}

A special case of a $G$-fibration is a $G$-fiber bundle over a $G$-CW-complex. More specifically, if $\xi : E\to B$ is a complex $G$-vector bundle over a $G$-CW-complex $B$, then the sphere bundle $p: S(E) \to B$ of this vector bundle is a spherical $G$-fibration. Note that for every $b \in B$, the fiber space $p^{-1} (b)$ is a $G_b$-space which is homeomorphic to $\bbS (V_{G_b})$ where $V_{G_b}$ denotes vector space $\xi ^{-1}(b)$ with the induced $G_b$-action.   

Note that when $B^H$ is path connected for all $H \in \Iso (B)$, the family of complex representations $\{ V_H\}$ defined over all $H \in \Iso (B)$ is a compatible family. The compatibility of a family of representations is defined in the following way: A family of representations $\alpha_H :H \to U(n)$ over $H \in \cH$ is called a {\it compatible family of representations} if for every map $c_g: H\to K$ 
induced by conjugation with $g\in G$, 
there exists a $\gamma \in U(n) $ such that the following diagram commutes
$$\xymatrix{K \ar[d]^{c_g} \ar[r]^-{\alpha _K}
& U(n) \ar[d]^{ c_{\gamma }}  \\
H  \ar[r]^-{\alpha _H}
&  U(n) .}\\ $$
Note that if $F_H$ is an $H$-space which is $H$-homotopy equivalent to $\bbS (V_H)$ for some compatible family of complex $H$-representation $V_H$, then $\{ F_H \}$ is a compatible family of $H$-spaces. So, the sphere bundle $p: S(E) \to B$ of a $G$-vector bundle is a spherical $G$-fibration with fiber type $\{ \bbS (V_H) \}$. Note also that for every $k\geq 1$, the fiber join $\ast _k \bbS (V_H)$ is $H$-homotopy equivalent to the $H$-space $\bbS (V_H ^{\oplus k})$ where $V_H ^{\oplus k}$ denotes the $k$-fold direct sum of $V_H$.

In Section \ref{sect:construction}, we construct $G$-fibrations with fiber types of the form $\{ \bbS (V_H)\}$. The following result is used in those constructions.

\begin{lemma}\label{lem:joinswap} Let $H$ be a finite group, $F_H$ be an $H$-space which is $H$-homotopy equivalent to $\bbS (V_H)$ for some complex $H$-representation $V_H$. Let $\gamma , \gamma ^1 : \Aut _H (F_H)\to \Aut _H (\ast _k F_H)$ be maps defined by $\gamma (a)= \varphi (a,a, \dots, a)$ and $\gamma ^1 (a)= \varphi (a, \id , \dots, \id)$, respectively. Then, the induced group homomorphisms $\gamma _*$ and $\gamma ^1 _*$ on homotopy $\pi_q (\Aut _H (F_H)) \to \pi _q (\Aut _H (\ast _k F_H)) $ satisfy the relation $\gamma _*=k \gamma^1 _*$.
\end{lemma}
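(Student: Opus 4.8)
The plan is to factor the diagonal map $\gamma$ through the multiplication of the monoid $\Aut_H(\ast_k F_H)$ and then exploit the permutation symmetry of the join. Writing $\gamma^i(a)=\varphi(\id,\dots,a,\dots,\id)$ with $a$ placed in the $i$-th slot (so $\gamma^1$ is exactly the map in the statement), a direct computation from the defining formula for $\varphi$ shows that $\gamma^i(a)$ alters only the $i$-th join coordinate and fixes the others. Since self-maps acting on disjoint coordinates commute, the composite, i.e. the monoid product, $\gamma^1(a)\circ\gamma^2(a)\circ\cdots\circ\gamma^k(a)$ applies $a$ to every coordinate, hence equals $\varphi(a,\dots,a)=\gamma(a)$. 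Thus $\gamma=m_k\circ(\gamma^1\times\cdots\times\gamma^k)\circ\Delta$, where $\Delta$ is the diagonal and $m_k$ denotes the $k$-fold multiplication of the monoid.

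First I would record that $\Aut_H(\ast_k F_H)$ is a grouplike topological monoid: its $\pi_0$ is a group because every element is an $H$-homotopy equivalence, so it is an H-space with strict unit $\id$. By the standard Eckmann--Hilton argument, for $q\ge 1$ the multiplication induces addition on homotopy, that is $(m_k)_*(\alpha_1,\dots,\alpha_k)=\alpha_1+\cdots+\alpha_k$ under the identification $\pi_q\bigl(\Aut_H(\ast_k F_H)^{\times k}\bigr)\cong\bigoplus_i\pi_q(\Aut_H(\ast_k F_H))$; here one uses that each factor inclusion followed by $m_k$ is the identity, since the unit is strict. Combined with $\Delta_*(\alpha)=(\alpha,\dots,\alpha)$, this yields $\gamma_*=\gamma^1_*+\gamma^2_*+\cdots+\gamma^k_*$ on $\pi_q$.

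It then remains to see that all the $\gamma^i_*$ coincide. Here I would use the swap $H$-homeomorphism $\tau_i$ of $\ast_k F_H$ that interchanges the first and the $i$-th join factors; conjugation gives $\gamma^i=c_{\tau_i}\circ\gamma^1$, where $c_{\tau_i}(b)=\tau_i\circ b\circ\tau_i^{-1}$ is a self-map of $\Aut_H(\ast_k F_H)$. If $\tau_i$ can be joined to $\id$ by a path $\tau_i(s)$ in $\Aut_H(\ast_k F_H)$, then $(b,s)\mapsto\tau_i(s)\circ b\circ\tau_i(s)^{-1}$ is a homotopy from the identity self-map to $c_{\tau_i}$, whence $(c_{\tau_i})_*=\id$ and $\gamma^i_*=\gamma^1_*$. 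Feeding this into the formula of the previous paragraph gives $\gamma_*=k\,\gamma^1_*$, as claimed.

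The crux, and the only place where the hypothesis is genuinely used, is producing the $H$-isotopy $\tau_i\simeq_H\id$. Replacing $F_H$ by the $H$-homotopy equivalent sphere $\bbS(V_H)$, the join becomes $\bbS(V_H^{\oplus k})$ and $\tau_i$ is realized by the unitary block permutation of $V_H^{\oplus k}$ swapping the first and $i$-th summands. This element lies in the group of $H$-equivariant unitary automorphisms of $V_H^{\oplus k}$, which by Schur's lemma is a product of unitary groups and hence connected; any path to the identity there is automatically $H$-equivariant and furnishes the required $H$-isotopy of spheres. I expect this connectedness step to be the main obstacle to formulate cleanly, since it is exactly where the complexity of $V_H$ is essential: for a real representation the analogous block swap may have determinant $-1$ and fail to be connected to the identity inside the orthogonal group, so the argument must be routed through the equivariant unitary group rather than through arbitrary self-equivalences.
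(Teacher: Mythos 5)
Your proof is correct and takes essentially the same route as the paper: the same decomposition of $\gamma$ as the monoid product of the coordinate maps $\gamma^1,\dots,\gamma^k$, the same identification of the monoid multiplication with addition on $\pi_q$, and the same swap-map-plus-connectedness argument, after replacing $F_H$ by $\bbS(V_H)$, to show all $\gamma^i_*$ agree. If anything, your write-up is slightly more careful than the paper's, which records the swap relation as left multiplication $\gamma^i = T(i,j)\gamma^j$ rather than conjugation and appeals only to connectedness of $U(n)$, whereas you correctly conjugate and note via Schur's lemma that the path to the identity can be taken inside the $H$-equivariant unitary group.
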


\begin{proof} Let $\gamma ^i: \Aut _H (F_H)\to \Aut _H (\ast _k F_H )$ be the map defined by $$\gamma ^i (a)=\varphi (\id, \dots, a, \dots, \id)$$ where $a$ is on the $i$-th coordinate. We have $\gamma=\gamma ^1 \gamma ^2 \cdots \gamma ^k$ under the product induced by the product in the monoid $A$. Since the group operation on $\pi _q (\Aut _H (\ast _k F_H))$ coming from the monoid structure on $\Aut _H (\ast _k F_H)$ coincides with the usual group structure on homotopy groups, we have 
$\gamma _*= \gamma ^1 _* +\cdots + \gamma ^k _*$. So, to complete the proof, it is enough to show that $\gamma ^i$ and $\gamma ^j$ are homotopic for every $i,j \in \{ 1, \dots, k\}$.
Since $F_H$ is $H$-homotopy equivalent to $\bbS (V_H)$, it is enough to prove this for $\bbS(V_H)$. Note that in this case
we have $\gamma ^i=T(i,j) \gamma ^j$ where $T(i,j): V_H^{\oplus k} \to V_H^{\oplus k}$ is a linear transformation which swaps the $j$-th summand with the $i$-th summand. Since $U(n)$ is connected, there is a path between $T(i,j)$ and the identity. Using this path, we can define a homotopy between $\gamma ^i$ and $\gamma ^j$.
\end{proof}

\begin{remark} For more general $H$-spaces $F_H$, there exists a swap map $$S(i,j): \ast _k F_H \to \ast _k F_H,$$ which swaps the $i$-th and $j$-th coordinates, similar to  the linear transformation $T(i,j)$ in the proof of Lemma \ref{lem:joinswap}. If $F_H$ is a free $H$-space homotopy equivalent to an odd dimensional sphere, then $S(i,j)$ will be homotopy equivalent to the identity map. If the $H$-action on $F_H$ is not free, then the swap map $S(i,j)$ is not homotopy equivalent to the identity in general even when $F_H$ is homotopy equivalent to an odd dimensional sphere. On the other hand, if $F_H$ is a homotopy representation with the property that all fixed point spheres are odd dimensional, then
under certain conditions on $H$ or on the dimension function of $F_H$, one can prove that $S(i,j)$ is homotopy equivalent to the identity (see Proposition 20.12 in \cite{lueck}).  
\end{remark}

We end this section with the following observation.

\begin{lemma}\label{lem:jointrivial} Let $H$ be a finite group and $p: E \to \bbS ^n $ be an $H$-fibration over the trivial $H$-space $\bbS ^n$ where $n\geq 2$. Suppose that the fiber type $F_H$ of $p$ is $H$-homotopy equivalent to $\bbS (V_H)$ for some complex $H$-representation $V_H$. If \ $\pi _{n-1} ( \Aut _H (F_H ))$ is a finite group of order $N$, then $\ast _N p$ is $H$-fiber homotopy equivalent to the trivial fibration. 
\end{lemma}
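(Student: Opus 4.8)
The plan is to translate the claim into a statement about classifying maps and then kill the relevant homotopy class by a degree argument. By Theorem \ref{thm:classification}, two $H$-fibrations over $\bbS^n$ with the same fiber are $H$-fiber homotopy equivalent if and only if their classifying maps are homotopic, and the trivial fibration corresponds to the constant map. So it suffices to show that the classifying map of $\ast _N p$ is null-homotopic. First I would exploit that $n\geq 2$ makes $\bbS^n$ simply connected: the homomorphism $\chi:\pi_1(\bbS^n)\to \pi_0(\Aut_H(F_H))$ is then trivial, so $p$ is homotopy orientable and its classifying map lifts to $\tilde f:\bbS^n\to B\Aut ^I_H(F_H)$. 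As recorded before the lemma, homotopy orientable $H$-fibrations are classified by $[\bbS^n,B\Aut ^I_H(F_H)]$, and since $\Aut ^I_H(F_H)$ is connected, $B\Aut ^I_H(F_H)$ is simply connected, so this set is $\pi_n(B\Aut ^I_H(F_H))\cong \pi_{n-1}(\Aut ^I_H(F_H))=\pi_{n-1}(\Aut_H(F_H))$, a group of order $N$. Let $\alpha$ denote the class of $\tilde f$ in this group.

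Next I would identify the classifying map of the $N$-fold join. By Lemma \ref{lem:fiberjoins} it is $B\varphi$ composed with the diagonal $b\mapsto (\tilde f(b),\dots,\tilde f(b))$. Since $\gamma(a)=\varphi(a,\dots,a)$ is exactly $\varphi$ precomposed with the diagonal monoid map $\Aut ^I_H(F_H)\to \prod_{i=1}^N \Aut ^I_H(F_H)$, and $B$ carries products to products and diagonals to diagonals up to homotopy, this composite equals $B\gamma\circ\tilde f$, where $\gamma:\Aut ^I_H(F_H)\to \Aut ^I_H(\ast _N F_H)$ preserves identity components. Thus $\ast _N p$ is classified by $(B\gamma)_*[\tilde f]$, and under the natural isomorphism $\pi_n(B\mathcal M)\cong \pi_{n-1}(\mathcal M)$ for grouplike monoids (here $\pi_0(\Aut_H(F_H))=\mathcal E_H(F_H)$ is a group), the map $(B\gamma)_*$ corresponds to $\gamma_*:\pi_{n-1}(\Aut_H(F_H))\to \pi_{n-1}(\Aut_H(\ast _N F_H))$. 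Hence the class of $\ast _N p$ is $\gamma_*(\alpha)$.

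Finally I would apply Lemma \ref{lem:joinswap} with $k=N$ and $q=n-1$, which gives $\gamma_*=N\gamma ^1_*$. Because $\pi_{n-1}(\Aut_H(F_H))$ has order $N$, every element is annihilated by $N$, so $N\alpha=0$ and therefore $\gamma_*(\alpha)=N\gamma ^1_*(\alpha)=\gamma ^1_*(N\alpha)=0$. Consequently the classifying map of $\ast _N p$ is null-homotopic, and by Theorem \ref{thm:classification} the fibration $\ast _N p$ is $H$-fiber homotopy equivalent to the trivial fibration.

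The step I expect to be the main obstacle — and the point where the hypotheses genuinely enter — is the passage through $\gamma_*=N\gamma ^1_*$ from Lemma \ref{lem:joinswap}. This is what converts the ``diagonal'' clutching data of the $N$-fold join into honest multiplication by $N$ on $\pi_{n-1}$, so that the order-$N$ assumption can annihilate the class; the reduction to homotopy orientable fibrations, the identification $\pi_n(B\Aut^I_H(F_H))\cong\pi_{n-1}(\Aut_H(F_H))$, and the recognition of the join's classifying map as $B\gamma\circ\tilde f$ are essentially bookkeeping by comparison. One should also take a little care that all maps $\varphi,\gamma,\gamma^1$ restrict to identity components so that everything lives over the simply connected space $B\Aut^I_H(\ast _N F_H)$, but this is automatic since these monoid maps preserve the identity and $\bbS^n$ is simply connected.
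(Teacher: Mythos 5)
Your proposal is correct and follows essentially the same route as the paper's own proof: reduce via Theorem \ref{thm:classification} to the classifying map, lift to $B\Aut^I_H(F_H)$ using $n\geq 2$, identify the class of $\ast_N p$ as $\gamma_*(\alpha)$ via Lemma \ref{lem:fiberjoins}, and kill it with $\gamma_*=N\gamma^1_*$ from Lemma \ref{lem:joinswap}. The extra detail you supply (the diagonal factorization giving $B\gamma\circ\tilde f$ and the care about identity components) is exactly what the paper compresses into the phrase ``a slightly modified version of Lemma \ref{lem:fiberjoins}.''
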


\begin{proof} By Theorem \ref{thm:classification}, the $H$-fibration $p$ is classified by the homotopy class of a map $f: \bbS^n \to B\Aut _H (F_H)$. Since $n\geq 2$, this map lifts to map $\widetilde f : S^n \to B\Aut^{I}_H (F_H)$, so we can assume that the $H$-fibration $p$ is an homotopy orientable fibration . Since $B\Aut ^I _H (F_H)$ is simply connected, we have $$[S^n, B\Aut^I _H (F_H )]\cong \pi _n (B\Aut ^I _H (F_H))\cong \pi_{n-1} (\Aut _H (F_H)).$$ So, $p$ is classified by a homotopy class $\alpha\in \pi _{n-1} (\Aut _H (F_H))$. By a slightly modified version of Lemma \ref{lem:fiberjoins}, it is easy to see that the fiber join $\ast _N p$ is classified by $\gamma _* (\alpha)$ where $\gamma: \Aut _H (F_H) \to \Aut _H ( \ast _H F_H )$ is the map defined by $\gamma (a)=\varphi (a,\dots, a)$. By Lemma \ref{lem:joinswap}, we have $$\gamma _* (\alpha)=N\gamma _* ^1 (\alpha)=\gamma _* ^1 (N \alpha )=0.$$
So, $\ast _N p$ is $H$-fiber homotopy equivalent to the trivial fibration.
\end{proof}

\section{Equivariant Federer spectral sequence}
\label{sect:Federer}

The main purpose of this section is to prove the following theorem which is due to M. Klaus \cite{klaus}. We give a different proof here using the equivariant Federer spectral sequence which was introduced by M\o ller in \cite{moller}.

\begin{theorem}[Klaus \cite{klaus}]\label{thm:finiteness of homotopy} Let $G$ be a finite group and $V$ be a complex representation of $G$. Then, for every $n>0$, there is an $m\geq 1$ such that $\pi_n (\Aut _G (\bbS (V ^{\oplus k})) $ is finite for all $k \geq m$.
\end{theorem}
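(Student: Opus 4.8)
The plan is to analyze the self-homotopy equivalences of $\bbS(V^{\oplus k})$ through the lens of the equivariant mapping space $\Map_G(\bbS(V^{\oplus k}), \bbS(V^{\oplus k}))$, into which $\Aut_G(\bbS(V^{\oplus k}))$ sits as a union of path components. Since passing to the identity component does not change higher homotopy groups, it suffices to show that $\pi_n \Map_G(\bbS(V^{\oplus k}), \bbS(V^{\oplus k}))$ is finite once $k$ is large enough. The natural tool is M\o ller's equivariant Federer spectral sequence, which computes the homotopy groups of an equivariant mapping space $\Map_G(X,Y)$ from a Bredon-type cohomology of $X$ with coefficients in a coefficient system built out of the ordinary homotopy groups $\pi_*(Y^H)$ of fixed-point spaces. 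The fixed points of $\bbS(V^{\oplus k})$ are themselves spheres, namely $\bbS(V^{\oplus k})^H = \bbS((V^H)^{\oplus k})$, whose dimension grows linearly in $k$.

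First I would set up M\o ller's spectral sequence in the form
\[
E_2^{s,t} = H^s_G\bigl(\bbS(V^{\oplus k}); \underline{\pi}_t\bigr) \Longrightarrow \pi_{t-s}\Map_G\bigl(\bbS(V^{\oplus k}), \bbS(V^{\oplus k})\bigr),
\]
where the coefficient system $\underline{\pi}_t$ assigns to an orbit $G/H$ the homotopy group $\pi_t$ of the fixed-point space $\bbS((V^H)^{\oplus k})$ of the target (based at the relevant map). The key observation is that these fixed-point spheres have dimensions that increase linearly with $k$, while the source complex $\bbS(V^{\oplus k})$ likewise has cells whose dimensions grow with $k$ but whose \emph{relative} dimension structure is controlled. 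The contributions to $\pi_n$ of the mapping space come from a finite range of bidegrees $(s,t)$ with $t-s = n$, and I would argue that for $k$ large the only coefficient groups $\pi_t\bbS((V^H)^{\oplus k})$ that enter are either trivial, or equal to $\bbZ$ in the top (Hurewicz) dimension of the fixed sphere, or lie in the stable range where the homotopy groups of spheres $\pi_t(\bbS^d)$ with $t < 2d-1$ are finite.

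The heart of the argument is a dimension count: I would show that once $k \geq m$ for a suitable $m$ depending on $n$ and on the gaps between the dimensions of the various fixed-point spheres $\dim \bbS((V^H)^{\oplus k})$, every coefficient group $\pi_t(\bbS((V^H)^{\oplus k}))$ contributing to $\pi_n$ is a finite group. The point is that to affect $\pi_n$ of the mapping space the relevant $t$ is bounded in terms of $n$ plus the cohomological dimension $s$ (itself bounded by the dimension of $\bbS(V^{\oplus k})^H$), and as $k \to \infty$ each $\pi_t$ is evaluated on a sphere whose dimension $d$ eventually satisfies $t \le 2d - 2$, placing us strictly inside the stable range where $\pi_t(\bbS^d)$ is finite by Serre's finiteness theorem—except precisely at the single diagonal entry corresponding to the identity component's rank, which we have already discarded by restricting to $\Aut^I$. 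The Bredon cohomology groups $H^s_G(-;\underline{\pi}_t)$ are finitely generated since $\bbS(V^{\oplus k})$ is a finite $G$-CW-complex, so finiteness of all relevant coefficient systems forces finiteness of the $E_2$ terms on the line $t-s=n$, and hence of $\pi_n$.

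The main obstacle I anticipate is controlling the unstable and free-summand contributions precisely. The genuinely delicate point is isolating which $\bbZ$-summands survive: the top-dimensional cohomology of a fixed sphere paired against the top homotopy of a target fixed sphere can produce an infinite $\bbZ$ unless the degrees are mismatched, and one must verify that for $k \ge m$ these infinite contributions occur only in the degree detecting a component of $\Aut_G$ (equivalently, a degree-type invariant) rather than in $\pi_n$ for $n>0$. This requires a careful bookkeeping of the dimension function $H \mapsto \dim_{\bbC} V^H$ and of how $s$ and $t$ interact along the antidiagonal $t - s = n$; I would handle it by choosing $m$ large enough that every fixed sphere dimension exceeds $n + s_{\max}$ by a definite margin, pushing all off-diagonal entries into the finite stable range and leaving no room for an infinite group to appear in total degree $n$.
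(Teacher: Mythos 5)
Your setup coincides with the paper's: M\o ller's equivariant Federer spectral sequence with $E_2$-term the Bredon cohomology $H^s_G(X_k;\underline{\pi}_t)$ (where $X_k=\bbS(V^{\oplus k})$), finite generation of these groups, and the reduction to showing the entries on the line $t-s=n$ are finite. But your central argument --- a dimension count pushing all coefficients into a ``finite stable range'' for large $k$ --- breaks down exactly at the point that is the real content of the theorem. The fixed sets $X_k^H$ are \emph{odd}-dimensional spheres of dimension $d_H=kn_H-1$, and by Serre's theorem $\pi_t(\bbS^{d})$ for $d$ odd is infinite precisely when $t=d$; the stable range is irrelevant here, since $\pi_d(\bbS^d)=\bbZ$ lies inside $t<2d-1$. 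Consequently, on the line $t=n+s$ the coefficient system $\pi_t(X_k^?)$ is rationally nonzero whenever $t=d_H$ for some isotropy subgroup $H$, i.e.\ at the positions $(s,t)=(d_H-n,\,d_H)$. These positions exist for \emph{every} $k$ once $d_H\ge n$: increasing $k$ only slides them to higher $s$, it never removes them, because the cohomological degree $d_H-n$ always lies within $\dim X_k^H\le\dim X_k$, so no dimension argument forces the corresponding $E_2$-entry $H^{d_H-n}_G(X_k;\pi_{d_H}(X_k^?))$ to vanish. Your proposed fix is circular: you cannot choose $k$ so that ``every fixed sphere dimension exceeds $n+s_{\max}$,'' since $s_{\max}$ is itself comparable to the largest fixed-sphere dimension $\dim X_k$. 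Nor does passing to $\Aut_G^I$ help: discarding components only affects $\pi_0$, whereas these $\bbZ$-coefficients sit on the line $t-s=n>0$.

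What is actually needed --- and what the paper proves --- is that the Bredon cohomology with these rationally nonzero coefficients \emph{vanishes}, not that the coefficients are finite. Rationally, $\pi_{kn_i-1}(X_k^?)\otimes\bbQ$ is the $\bbQ\G$-module $M_i$ equal to $\bbQ$ exactly on subgroups $H$ with $\dim V^H=n_i$ and zero elsewhere, so one must show $H^{kn_i-n-1}_G(X_k;M_i)=0$ for large $k$. The paper does this with a second, algebraic spectral sequence $E_2^{pq}=\Ext^p_{\bbQ\G}\bigl(H_q(X_k^?;\bbQ),M_i\bigr)\Rightarrow H^{p+q}_G(X_k;M_i)$: the homology of $X_k^?$ is concentrated in degrees $kn_j-1$, giving the Ext-groups $\Ext^{k(n_i-n_j)-n}_{\bbQ\G}(N_j,M_i)$, and by L\"uck's length bound (Prop.\ 17.31 of \cite{lueck}) these vanish as soon as $k(n_i-n_j)-n$ exceeds the length $l_j$ of $N_j$ --- a bound \emph{independent of $k$}, since the support of $N_j$ is the same family of subgroups for every $k$. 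So the mismatch one must exploit is not homotopy-theoretic (between $t$ and the dimension of the target sphere) but orbit-category-algebraic: the coefficient module $M_i$ and the homology modules $N_j$ are supported on different isotropy layers, and the degree gap $k(n_i-n_j)-n$ between them grows linearly in $k$ while the Ext-vanishing threshold stays fixed. This step is entirely absent from your proposal, and without it the argument does not go through.
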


Before the proof, we first introduce the standard definitions about Bredon cohomology and local coefficients systems
that we are going to use to describe the equivariant Federer spectral sequence. For more details on Bredon cohomology we refer the reader to \cite{bredon}.
  
Let $X$ be a topological space.  A {\it local coefficient system} of $X$ is a functor $L:\Pi (X)\to \cAb $ where $\Pi (X)$ denotes the fundamental groupoid of $X$ and
$\cAb $ denotes the category of abelian groups. Let $\cL $ denote the category whose objects are pairs $(X,L)$ where $X$ is a topological space and $L$ is a local coefficient system of $X$ and a morphism from $(X,L)$ to $(Y,M)$ is a pair $(f,\varphi )$ where $f:X\to Y$ is a continuous function and $\varphi $ is a natural transformation from $L$ to $M\circ f_*$. Here $f_*$ denotes the functor $f_*:\Pi (X)\to\Pi (Y)$ induced by $f$ sending $x$ to $f(x)$ and $\gamma $ to $f\circ \gamma $. 

Let $G$ be a finite group and $\Or _{G}$ denote the {\it orbit category} of $G$ whose objects are
orbits $G/H$ where $H$ is a subgroup of $G$. The morphisms of $\Or _{G}$ from $G/H$ to $G/K$ are $G$-maps between them where we consider the left cosets $G/H$ and $G/K$ as left $G$-sets. We denote the morphism from $G/H$ to $G/K$ which sends $H$ to $aK$ by $\hat{a}$.  

\begin{definition}\label{defn:eqlocalcoefsystem} Let $\cTop $ denote the category of topological spaces and continuous maps. Let $X$ be a $G$-space. We define a contravariant functor $\Phi(X):\Or _{G}\to \cTop $ which sends $G/H$ to $X^H$ and $\hat{a}$ to $a:X^K\to X^H$. A $G$-{\it equivariant local coefficient system} on $X$ is a contravariant functor $\underline{\, L} :\Or _{G}\to \cL $ such that $F\circ  \underline{\, L} =\Phi(X)$ where $F:\cL \to \cTop$ is the forgetful functor  which sends $(X,L)$ to $X$ and $(f,\varphi )$ to $f$. We will use the following notation: $\underline{\, L}(G/H)=(X^H , L(G/H))$.
\end{definition}

Let $\underline{L}$ be a $G$-equivariant local coefficient system of a finite $G$-CW-complex $X$. Recall that for a coefficient system $L$ on $X$, the group of $n$-cochains
$\Gamma ^n(X; L)$ is defined as the group of all functions $c$ which take $n$-cells $\sigma $ in $X$ with characteristic map $h_{\sigma}:\Delta ^n\to X$ and send it to an element $c(\sigma )$ in $L(z_{\sigma })$ where $\Delta ^n$ is considered as the convex hull of a linearly independent subset $\{e_0,e_1,\dots e_n\}$ of $\bbR^{n+1}$ and  $z_{\sigma }=h_{\sigma}(e_0)$. The coboundary operator $\delta:\Gamma ^{n-1}(X;L)\to \Gamma ^{n}(X;L)$ is defined as follows: For $c$ in $\Gamma ^{n-1}(X;L)$ and $\sigma $ a $n$-cell  in $X$, we have
$$(-1)^n(\delta c)(\sigma )=L(\gamma _\sigma )^{-1}c(\partial _0\sigma )+\sum _{i=1}^{n} (-1)^{i}c(\partial _i\sigma )\in L(z_{\sigma })$$
where  $\partial _i\sigma $ is the $(n-1)$-cell with characteristic map $h_{\sigma}\circ d^n_i$ and $\gamma _\sigma (t)=h_{\sigma}((1-t)e_1+te_0)$. Here $d^n_i:\Delta ^{(n-1)}\to \Delta ^n$ is the affine map sending $e_j$ to $e_j$ when $j<i$ and to $e_{j+1}$ otherwise. Now for the $G$-equivariant coefficient system $\underline{L}$, we define $\Gamma ^n_G(X;\underline{L} )$ as elements in the direct sum $$\underline{\, c}=(c(G/H)) \in \bigoplus _{H\leq G}\Gamma ^n(X^H;L(G/H) )$$ which satisfy the following condition:
$$c(G/H)(a\sigma )=\underline{\, L}(\hat{a})(z_{\sigma })(\, c(G/K)(\sigma )\, ) \text{\ \  in } L(G/H)(az_{\sigma })$$
for all $\sigma \in X^K$ and $a\in G$ with $a^{-1}Ha \leq K$.   We can define a coboundary operator $\underline{\delta} :\Gamma ^{n-1}_G(X,\underline{L})\to \Gamma ^{n}_G(X,\underline{L})$ by the direct sum of the ordinary coboundary operators
$$\underline{\, \delta }=\bigoplus _{H\leq G} \delta _H:\bigoplus _{H\leq G}\Gamma ^n(X^H ;L(G/H) )\to\bigoplus _{H\leq G}\Gamma ^{n+1}(X^H ; L(G/H) )$$
Since the ordinary coboundary operator is a natural transformation from $\Gamma^n$ to $\Gamma^{n+1}$ considered as functors from $\cL $ to $\cAb $, we get $\underline{\, \delta }(\Gamma ^n_G(X;\underline{\, L} ))\subseteq \Gamma ^{n+1}_G(X;\underline{\, L} )$. The above definition easily generalizes to relative $G$-CW-complexes. 

\begin{definition} The $n$-th cohomology of a relative $G$-CW-complex $(X,A)$ with $G$-equivariant local coefficients $\underline{\, L}$ is defined as follows: $$H ^n_G(X,A;\underline{\, L} )=H^n\left(\Gamma ^*_G(X,A;\underline{\, L} ),\underline{\, \delta }\right).$$ 
\end{definition}

We will be using the Bredon cohomology with a particular local coefficient system that comes from a $G$-fibration. We now introduce this coefficient system. 

Let $p:E\to B$ be a $G$-fibration. Then $p^H:E^H\to B^H$ is a fibration for all $H\leq G$. Assume that for all $H\leq G$ and for all $b\in B$, the space $p^{-1}(b)^H$ is a path connected simple space.   Associated to the fibration $p$, there is a $G$-equivariant local coefficient system on the $G$-space $B$ defined as the functor $$\pi _n (\cF ): \Or _{G}\to \cL $$ which sends $G/H$ to $(B^H, \pi _n (\cF ^H))$ and $\hat{a}$  to $(a,a_*)$ where $\pi _n (\cF ^H):\Pi(B^H)\to \cAb $ is the functor which sends $b \in B$ to $\pi _n (p^{-1}(b)^H)$ and  sends a path $\gamma $ with $\gamma(0)=c$ and $\gamma(1)=b$ to a homomorphism from $\pi _n (p^{-1}(b)^H)$ to $\pi _n (p^{-1}(c)^H)$ which is induced by a map admissible over $\gamma $ (see \cite[page 185]{whitehead}). 

Let $(X,A)$ be a finite $G$-CW-complex, $p:E\to B$ be a $G$-fibration, and $u:X\to E$ be a $G$-equivariant map. As above,  assume that for all $H\leq G$ and for all $b\in B$, the space $p^{-1}(b)^H$ is a path connected simple space, and let $\pi _q (\cF )$ be the $G$-equivariant local coefficient system on $B$ which was introduced above. By abuse of notation, we will again write $\pi _q (\cF )$ for the $G$-equivariant local coefficients system on $X$ induced from $\pi _q (\cF )$ via the map $p\circ u$. Let $F_u(X,A;E,B)^G$ denote the space of all equivariant maps $v:X\to E$ such that $v|_A=u|_A$ and $p\circ v=p\circ u$ with compact open topology.  

\begin{theorem}[M\o ller \cite{moller}]\label{thm:equivFedererSS}   There is a spectral sequence with $E^2$-term 
$$E^2_{pq}=H^{-p}_G(X,A;\pi _q (\cF ))$$
for $p+q\geq 0$ and $E^2_{pq}=0$ otherwise, converging to
$\pi _{p+q}(F_u(X,A;E,B)^G,u)$
when $p+q>0$.
\end{theorem}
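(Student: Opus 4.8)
The plan is to construct the spectral sequence from the skeletal filtration of the domain $(X,A)$, carrying out Federer's classical argument $G$-equivariantly so that the orbit-category bookkeeping of the cells reproduces the Bredon cochain complex. Write $X_{-1}=A$ and $X_n=A\cup X^{(n)}$, where $X^{(n)}$ is the $n$-skeleton of the $G$-CW-complex $X$; since $X$ is a finite $G$-CW-complex this is a finite filtration with $X_N=X$ for $N=\dim X$. Applying the equivariant section-space functor $F_u(-,A;E,B)^G$ to the inclusions $X_{n-1}\hookrightarrow X_n$ produces a finite tower
$$F_u(X,A;E,B)^G=F_u(X_N,A;E,B)^G\to\cdots\to F_u(X_n,A;E,B)^G\xrightarrow{\,r_n\,}F_u(X_{n-1},A;E,B)^G\to\cdots.$$
The first step is to check that each $r_n$ is a Hurewicz fibration; this is the equivariant analogue of the standard fact that restricting sections of a fibration along a cofibration of the base yields a fibration, and it follows from the $G$-homotopy lifting property of $p$ (Definition \ref{defn:Gfibration}) together with the fact that $X_{n-1}\hookrightarrow X_n$ is a $G$-cofibration. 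The desired spectral sequence is the one attached to the exact couple assembled from the long exact homotopy sequences of this tower of fibrations, with $p=-n$ recording the skeletal (equivalently cohomological) filtration degree.

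The heart of the argument is the identification of the fibers of the maps $r_n$, which is where equivariance is converted into Bredon cochains. Fixing a section $v\in F_u(X_{n-1},A;E,B)^G$, the fiber $r_n^{-1}(v)$ is the space of $G$-extensions of $v$ over $X_n$. Since $X_n$ is obtained from $X_{n-1}$ by attaching equivariant cells $G/H_\alpha\times D^n$, such an extension is determined by its restriction to the representative discs $\{eH_\alpha\}\times D^n$, and equivariance forces this restriction to land in the fixed-point space $E^{H_\alpha}$ while covering the prescribed map into $B^{H_\alpha}$. Filling in the $\alpha$-th cell is therefore a relative lifting problem for the fixed-point fibration $p^{H_\alpha}\colon E^{H_\alpha}\to B^{H_\alpha}$ with boundary datum supplied by $v$; under the hypothesis that each $p^{-1}(b)^H$ is path connected and simple, the space of fillings of a single cell is homotopy equivalent to the $n$-fold loop space of $p^{-1}(z_\alpha)^{H_\alpha}$, so its $m$-th homotopy group is $\pi_{m+n}(p^{-1}(z_\alpha)^{H_\alpha})$. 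Taking the product over all equivariant $n$-cells and imposing the relation between cells lying in a common $G$-orbit---which is exactly the compatibility condition $c(G/H)(a\sigma)=\underline{L}(\hat a)(z_\sigma)\bigl(c(G/K)(\sigma)\bigr)$ defining $\Gamma^n_G$---identifies $\pi_m(r_n^{-1}(v))$ with $\Gamma^n_G(X,A;\pi_{m+n}(\cF))$. Setting $q=m+n$ this reads $E^1_{pq}=\Gamma^{-p}_G(X,A;\pi_q(\cF))$ in total degree $p+q=m$.

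It then remains to identify the $d_1$-differential and to settle convergence. The $d_1$-differential of the exact couple is the composite of the boundary map of $r_n$ with the inclusion of the next fiber; a naturality computation, tracking how the attaching maps of the $(n+1)$-cells meet the $n$-cells and how parallel transport in the fibration realizes the functor $\pi_q(\cF)$ on $\Or_G$, shows that this composite is precisely the Bredon coboundary $\underline{\delta}$. Hence the $E^1$-page with its $d_1$ is the cochain complex $(\Gamma^*_G(X,A;\pi_q(\cF)),\underline{\delta})$, and $E^2_{pq}=H^{-p}_G(X,A;\pi_q(\cF))$. Because the filtration is finite the exact couple is bounded, so the spectral sequence converges strongly to $\pi_{p+q}(F_u(X,A;E,B)^G,u)$; the terms with $p+q<0$ vanish, being homotopy groups of fibers in negative degree, and the restriction to $p+q>0$ is the usual caveat forced by the non-abelian and basepoint-dependent behavior of $\pi_0$ and $\pi_1$ of function spaces.

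I expect the main obstacle to be the identification of $d_1$ with $\underline{\delta}$ together with the verification that the parallel-transport isomorphisms organize $\pi_q$ of the fixed-point fibers into the $G$-equivariant local coefficient system $\pi_q(\cF)$ in the precise sense of Definition \ref{defn:eqlocalcoefsystem}. This is the point at which the orbit-category morphisms $\hat a$, the admissible maps over paths, and the sign conventions in the coboundary formula must all be matched simultaneously; moreover the reduction of the equivariant filling problem to the fixed-point fibrations $p^{H}$ has to be made genuinely functorial over $\Or_G$ rather than merely cell-by-cell, and the low-degree convergence statement requires the care indicated above.
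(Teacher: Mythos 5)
The paper contains no proof of this statement: Theorem \ref{thm:equivFedererSS} is imported directly from M\o ller \cite{moller} and used as a black box to compute $\pi_n(\Aut_G(X_k))$, so there is no in-paper argument to measure your attempt against. Your outline is essentially M\o ller's own construction, which equivariantizes Federer's original argument \cite{federer}: the skeletal filtration $X_n=A\cup X^{(n)}$, the tower of restriction fibrations on equivariant section spaces (restriction along a $G$-cofibration of a $G$-fibration's section spaces is a fibration), the identification of the fibers with products of $n$-fold loop spaces of the fixed-point fibers $p^{-1}(z_\sigma)^{G_\sigma}$ --- one factor per $G$-orbit of cells, which is precisely what the compatibility condition defining $\Gamma^n_G$ encodes --- and the exact couple of the resulting finite tower. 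Two points deserve care beyond what you wrote: the fibers of $r_n$ can be empty at an arbitrary $v$, so the $E^1$-identification should be anchored at the basepoint sections $u|_{X_n}$ (this is also where path-connectedness and simplicity of the spaces $p^{-1}(b)^H$ are genuinely used, to make $\pi_m$ of the filling spaces independent of basepoint choices and of the trivialization over each disc); and, as you acknowledge yourself, the matching of $d_1$ with $\underline{\delta}$ --- signs, admissible maps over paths, and the assembly of the transport isomorphisms into the functor $\pi_q(\cF)$ on $\Or_G$ in the sense of Definition \ref{defn:eqlocalcoefsystem} --- is the bulk of the verification in \cite{moller} and is flagged rather than carried out here. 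With those caveats, your plan is correct and follows the same route as the cited source.
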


The spectral sequence above is called the equivariant Federer spectral sequence since it is the equivariant version of a spectral sequence introduced by Federer \cite{federer}. We will be using this spectral sequence for the following special case: Let $X$ be a finite $G$-CW-complex such that $X^H$ is a path connected simple space for all $H \leq G$. Take $A=\emptyset $, $E=X$, $B=*$, $p:E\to B$ to be the constant map, and $u:X\to E$ to be the identity map. Then $F_{\id} (X, \emptyset ; X , * )$ will be homotopy equivalent to the identity component of $\Aut _G(X)$. Since all the components of $\Aut _G(X)$ have the same homotopy type, we have
$$\pi _n (\Aut _G(X))\iso \pi _n (F_{\id} (A, \emptyset; X, *))$$ for all $n>0$. So, we can use the equivariant Federer spectral sequence to calculate the homotopy groups of $\Aut _G (X)$.

Also note that in the situation we consider, the local coefficient system is constant on orbits. Bredon cohomology with coefficients in a $G$-equivariant local coefficients system has an alternative description when the 
coefficient system is constant on $G$-orbits. This description involves the modules over the orbit category which we will define now.

Let $G$ be a finite group and let $\Gamma $ denote the orbit category $\Or _G$. A functor 
from $\Or _G$ to the category of abelian groups $\cAb$ is 
called an $\bbZ \G$-module. Morphisms between $\bbZ \G$-modules are given by natural transformations. Given a $G$-CW-complex $X$, we define a chain complex of $\bbZ \G$-modules by taking $C_n(X^?)$ as the functor $\Or _G\to \cAb $ which sends $G/H$ to the $n$-th cellular chains $C_n(X^H)$ and sends $\hat{a}:G/H\to G/K$ to the group homomorphism $a_*:C_n(X^K)\to C_n(X^H)$. 

Let $\underline{L}$ be a $G$-equivariant local coefficient system of $X$. Suppose that there exists a $\bbZ \G$-module $M$ such that $L(G/H)(x)=M(G/H)$ and $L(G/H)(\gamma )=\id _{M(G/H)}$ for all $x\in X^H$ and all paths $\gamma $ in $X^H$. Then we have $$\Gamma ^n_G(X;\underline{L} )\cong \Hom _{\bbZ \G}(C_n(X^?),M)$$ where the isomorphism is given by sending $\underline{\, c}=(c(G/H))$ in $\Gamma ^n_G(X; \underline{L} )$ to the homomorphism $\alpha : C_n(X^?) \to M$ defined by   $\alpha (G/H)(\sigma )= c(G/H)(\sigma )$ for all $H\leq G$. 
The boundary maps at each $H$ are compatible with respect to inclusions and conjugations, so they combine together to give a $\bbZ \G$-module map  $\bd : C_n (X^?) \to C_{n-1} (X^{?})$ for every $n$. Using these boundary maps, we obtain a cochain complex of abelian groups
$$C^n (X, M) =\Hom _{\bbZ \G}(C_n(X^?),M) \cong \bigoplus _{[\sigma ]\in \cI _n}M(G/G_{\sigma })$$
where $\cI _n$ is a set of $G$-orbits of $n$-cells in $X$. Note that the last isomorphism comes from the standard properties of free $\bbZ \G$-modules (see \cite[Sec. 9]{lueck}). The cohomology of this cochain complex is denoted by $H^n _G (G, M)$ and we have an isomorphism $H^n _G (X; \underline{L}) \cong H^n _G (X; M)$ for all $n\geq 0$ when 
$\underline{L}$ is a $G$-equivariant local coefficients system on $X$ and $M$ is a $\bbZ \G$-module such that $M(G/H)=\underline L (G/H)(x)$ for all $H \leq G$ and $x \in X^H$.
In our situation, this gives an isomorphism 
$$H ^n _G(X; \pi _q (\cF ) )\cong H ^n _G(X; \pi _q (X^? ) )$$
where $\pi _q (X^?)$ is the $\bbZ \G$-module  $\pi_q(X^?):\Or _G\to \cAb $ which sends $G/H$ to $\pi _q(X^H)$ and sends $\hat{a}:G/H\to G/K$ to $a_*:\pi _q (X^K)\to \pi _q (X^H)$.  Also note that on the cochain level, we have 
$$C^n (X, \pi _q(X^?))=\Hom _{\bbZ \G}(C_n(X^?),\pi _q(X^?))\cong \bigoplus _{[\sigma ]\in \cI _n}\pi _q(X^{G_\sigma }).$$
So, we have an explicit description of the $E_{pq}^2$-terms of the equivariant Federer spectral sequence. Now we are ready to prove the main theorem of this section.

\begin{proof}[Proof of Theorem \ref{thm:finiteness of homotopy}] Let $G$ be a finite group and $X$ be a finite $G$-CW-complex which is $G$-homotopy equivalent to $\bbS (V)$ for some complex representation $V$ of $G$. In fact, we only need $X$ to be a $G$-homotopy representation with odd dimensional fixed point spheres for our arguments to work (see \cite[pg. 392]{lueck} for a definition of homotopy representation). Let $n$ be a fixed positive integer. We want to show that there is an $m\geq 1$ such that $\pi_n (\Aut _G (\ast _k X))$ is finite for all $k \geq m$. Let $X_k=*_k X$ denote the $k$-fold join of $X$. By Theorem \ref{thm:equivFedererSS}, there is a spectral sequence with
$$E^2_{pq}=H^{-p}_G(X_k;\pi _q (X_k^?))$$
for $p+q\geq 0$ and $E^2_{pq}=0$ otherwise, converging to
$\pi _{p+q}(\Aut_G(X_k))$ when $p+q>0$. Since $X_k$ is finite dimensional, to show that $\pi _n (\Aut _G (X_k))$ is finite it is enough to show $H^{-p}_G(X_k;\pi _q (X_k^?))$ is finite for every pair $(p, q)$ with $p+q = n$. Note that for this we need to show that there is an $m \geq 1$ such that for all $k\geq m$, the cohomology group $H^{q-n}_G(X_k;\pi _q (X_k ^?)\otimes \bbQ)$ is zero for all  $q \geq n$.

Let $\{ n_1, n_2, \dots, n_s\}$ be the set of all distinct dimensions of fixed subspaces $V^H$ over all subgroups $H \leq G$. Assume that $n_1< n_2 < \dots < n_s$. Note that the fixed point spheres $X_k^H$ have dimensions $\{kn_i-1 \ |\ i=1,\dots, s\}.$ Since homotopy groups $\pi _i (S^{2j-1}) $ of an odd dimensional sphere are all finite except when $i=2j-1$, we have $\pi _ q (X_k^?) \otimes \bbQ =0$ for all $q$ which is not equal to $kn_i-1 $ for some $i$. If $q=kn_i-1$ for some $i$, then we have 
$$ H_G ^{q-n} (X_k ; \pi _q (X_k^{?} )\otimes \bbQ )=H_G ^{kn_i-n-1 } (X_k; M_i) $$
where $M_i$ is the $\bbZ\G$-module such that $M_i (H)=\bbQ$ for all subgroups $H \leq G$ satisfying $\dim V^H=n_i$. To complete the proof we need to show that this cohomology group is zero for all $i\in \{ 1, \dots, s\}$. 

Note that there is a well-known first quadrant spectral sequence with $E_2$-term
$$ E_2 ^{pq}=\Ext _{\bbZ \G } ^p (H_q (X_k^?), M_i )$$ which converges to $H_G ^{p+q} (X_k; M_i)$ (see \cite[Prop. 3.3]{unlu-yalcin2}). Since the coefficient module $M_i$ takes only the values $\bbQ$, we can replace $H_p(X_k^?)$ with $H_p (X_k^? ; \bbQ)$ and take the ext-groups over $\bbQ \G$. Note that the $\bbQ \G$-module $H_{p} (X_k^?; \bbQ)$ is zero at all dimensions except when $p=kn_i-1$ for some $i$. Let $N_i$ denote the $\bbQ \G$-module $H_{kn_i-1} (X_k^?; \bbQ)$ for all $i=1,\dots ,s$. To prove that $H_G ^{kn_i -n-1} (X_k; M_i)=0$ for all $i$, it is enough to show that the ext-group $$\Ext _{\bbQ \G} ^{k(n_i-n_j)-n} ( N_j, M_i)$$
is zero for all $j\leq i-1$. Let $l_j $ denote the length of the $\bbQ \G$-module $N_j$ for every $j$ (see \cite[pg. 325]{lueck} for a definition). Then, by \cite[prop. 17.31]{lueck}, the above ext-group is zero if $k(n_i-n_j)-n \geq l_j$. Let $l=\max_j \{l_j\}$. Then if $k\geq n+l$, then the above inequality will hold for every $j\leq i-1$. This completes the proof.
\end{proof}

\section{Construction of spherical $G$-fibrations}
\label{sect:construction}

We start with proving a proposition which is an important tool for constructing $G$-fibrations. In different forms, this proposition also appears in \cite{connolly-prassidis}, \cite{klaus}, and \cite{unlu-thesis}. Here we give a proof of it for completeness since it is the main 
ingredient in the proof of Theorem \ref{thm:main}.

\begin{proposition}\label{pro:mainconsttool} Let $G$ be a finite group, $B$ be a $G$-CW-complex, and let $\{V_H\}$ be a compatible family of complex representations over all $H \in \Iso (B)$. Let $q_n : E_n \to B^{(n)} $, $n\geq 2$, be a $G$-fibration with fiber type $\{ \bbS(V_H )\}$ where $B^{(n)}$ denotes the $n$-skeleton of $B$. Then there is an integer $k\geq 1$ and a $G$-fibration $q_{n+1}:E_{n+1}\to B^{(n+1)}$ such that the restriction of $q_{n+1}$ to $B^{(n)}$ is $G$-fiber homotopy equivalent to $\ast _k q_n$. In particular, the fiber type of $q_{n+1}$ is  $\{ \bbS (V_H ^{\oplus k} )\}$.
\end{proposition}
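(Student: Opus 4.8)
The plan is to extend $q_n$ over $B^{(n+1)}$ one orbit of cells at a time by equivariant obstruction theory, and to arrange that every obstruction dies after passing to a sufficiently high fiber join. Write $B^{(n+1)}$ as obtained from $B^{(n)}$ by attaching equivariant $(n+1)$-cells $G/H_j \times D^{n+1}$, $j = 1, \dots, r$, along $G$-maps $\phi_j : G/H_j \times \bbS^n \to B^{(n)}$, where each $H_j \in \Iso(B)$. Since $D^{n+1}$ carries the trivial action, the points of $\{eH_j\} \times \bbS^n$ have isotropy $H_j$, and the pullback $\phi_j^* q_n$ is a $G$-fibration over $G/H_j \times \bbS^n$ that corresponds to an $H_j$-fibration $p_j$ over the trivial $H_j$-space $\bbS^n$ with fiber type $\bbS(V_{H_j})$. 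Extending $q_n$ over the orbit $G/H_j \times D^{n+1}$ amounts to extending $p_j$ over $D^{n+1}$; as $D^{n+1}$ is contractible any such extension is fiber homotopy trivial, so the extension exists if and only if $p_j$ is itself $H_j$-fiber homotopy equivalent to the trivial fibration.

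First I would pin down the obstruction. By Theorem \ref{thm:classification}, the $H_j$-fibration $p_j$ over the trivial $H_j$-space $\bbS^n$ is classified by a homotopy class $f_j : \bbS^n \to B\Aut_{H_j}(\bbS(V_{H_j}))$. Because $n \geq 2$, the sphere $\bbS^n$ is simply connected, so $f_j$ lifts to $B\Aut^{I}_{H_j}(\bbS(V_{H_j}))$, making $p_j$ homotopy orientable; it is then determined by a single class $\alpha_j \in \pi_n(B\Aut^{I}_{H_j}(\bbS(V_{H_j}))) \cong \pi_{n-1}(\Aut_{H_j}(\bbS(V_{H_j})))$, which vanishes exactly when $p_j$ is fiber homotopy trivial. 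Thus the obstruction to extending over the $j$-th orbit of cells is this one element $\alpha_j$, and since the open cells are disjoint these obstructions are independent of one another.

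Now I would kill them by a single global fiber join. Applying Theorem \ref{thm:finiteness of homotopy} to each of the finitely many subgroups $H_j$, with representation $V_{H_j}$ and integer $n-1$, produces an integer $k_0 \geq 1$, chosen uniformly in $j$ by taking a maximum, such that $\pi_{n-1}(\Aut_{H_j}(\bbS(V_{H_j}^{\oplus k_0})))$ is finite, say of order $N_j$, for every $j$. Replacing $q_n$ by $\ast_{k_0} q_n$, whose fiber type is $\{\bbS(V_H^{\oplus k_0})\}$, let $N$ be a common multiple of $N_1, \dots, N_r$ and set $k = k_0 N$. Since pullback commutes with fiber joins, the restriction of $\ast_k q_n$ over each attaching sphere is $\ast_N p_j'$, where $p_j'$ is the pullback of $\ast_{k_0} q_n$. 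By the computation in the proof of Lemma \ref{lem:jointrivial}, this join is classified by $\gamma_*(\alpha_j) = \gamma^1_*(N\alpha_j) = 0$, because $\alpha_j$ has order dividing $N_j \mid N$; hence each restriction is $H_j$-fiber homotopy trivial. Extending the (now null-homotopic) classifying map $\bbS^n \to B\Aut_{H_j}(\ast_N F_{H_j})$ over the disk on one representative cell, propagating by the $G$-action and using the compatibility of $\{V_H\}$ to keep the fiber types consistent, and gluing to $\ast_k q_n$ over $B^{(n)}$, yields a $G$-fibration $q_{n+1} : E_{n+1} \to B^{(n+1)}$ whose restriction to $B^{(n)}$ equals $\ast_k q_n$ and whose fiber type is $\{\bbS(V_H^{\oplus k})\}$, as claimed.

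The hard part is the equivariant bookkeeping rather than any single estimate: reducing the extension problem over each equivariant cell to an honest $H_j$-fibration over a trivial $H_j$-sphere, identifying its obstruction with one class in $\pi_{n-1}(\Aut_{H_j}(\bbS(V_{H_j})))$ (which is where the hypothesis $n \geq 2$ enters, through homotopy orientability), and then choosing a \emph{single} exponent $k$ whose join simultaneously trivializes all of the $\alpha_j$ while leaving the construction $G$-equivariant. The finiteness supplied by Theorem \ref{thm:finiteness of homotopy} together with the join-killing mechanism of Lemma \ref{lem:jointrivial} is precisely what makes such a uniform choice of $k$ available.
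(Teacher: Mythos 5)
Your argument follows the paper's own proof almost step for step: the same equivariant cell decomposition of $B^{(n+1)}$, the same reduction of each orbit of cells to an $H_j$-fibration over the trivial $H_j$-space $\bbS^n$ classified (via Theorem \ref{thm:classification} and homotopy orientability, which is where $n\geq 2$ enters) by a class $\alpha_j \in \pi_{n-1}(\Aut_{H_j}(\bbS(V_{H_j})))$, the same uniform choice of join exponent via Theorem \ref{thm:finiteness of homotopy}, and the same killing of the obstructions via Lemma \ref{lem:jointrivial}. So in approach there is nothing to distinguish the two.

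However, there is one genuine gap at the end. After trivializing $\ast_k q_n$ over each attaching sphere, you assert that extending the (null-homotopic) classifying map over the disk, propagating by the $G$-action, and ``gluing to $\ast_k q_n$ over $B^{(n)}$'' yields a $G$-fibration $q_{n+1}$. But the gluing is necessarily performed along a $G$-fiber homotopy \emph{equivalence}, not an isomorphism of fibrations: the fibration over $G/H_j \times \bbD^{n+1}$ produced by the extended classifying map restricts over the attaching sphere only to something fiber homotopy equivalent to the pullback of $\ast_k q_n$, and a pushout of fibrations along a fiber homotopy equivalence is in general only a $G$-quasifibration --- the homotopy lifting property can fail at the seam. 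This is precisely the point the paper is careful about: it glues the cone of the trivial fibration along the equivalence $\varphi$, observes that the result is a $G$-quasifibration, and then applies Waner's gammafication construction \cite[pg. 375]{waner1} to replace it by an honest $G$-fibration of the same fiber type (alternatively, one can attach the trivial pieces using $G$-tubes as in \cite{guclukanpaper}). Your proof needs this repair step; note that the conclusion of the proposition accordingly only demands that $q_{n+1}|_{B^{(n)}}$ be $G$-fiber homotopy equivalent to $\ast_k q_n$, not equal to it, which is exactly what the gammafication step delivers.
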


\begin{proof} By the definition of $G$-CW-complexes, there exists a pushout diagram
\begin{equation}\label{eqn:pushout}
\vcenter{\xymatrix{\displaystyle \coprod _{i\in I_{n+1}} G/H_i\times \bbS ^{n}
\ar[r]^-{\coprod   f_i}\ar[d] & B^{(n)}  \ar[d] \\
\displaystyle \coprod _{i\in I_{n+1}}G/H_i\times \bbD ^{n+1}
\ar[r]^-{\coprod g_i} & B^{(n+1)}}}\end{equation}where $I_{n+1}$ is an
indexing set of orbits of $(n+1)$-cells in $B$. For each $i\in I_{n+1}$, let $q_{n,i}$ denote the $G$-fibration obtained by 
the following pullback diagram
$$\xymatrix{ E_{n, i} \ar[r]\ar[d]^{q_{n,i}} & E_n  \ar[d] \ar[d]^{q_n} \\G/H_i\times \bbS ^{n}
\ar[r]^-{f_i}& B^{(n)}\ . }$$ Restricting $q_{n,i}$ to the sphere $\bbS ^n$ in $G/H_i \times \bbS ^n$ which is fixed by $H_i$, we obtain an $H_i$-fibration $q_{n,i} |_{\bbS^n} : q_{n,i} ^{-1} (\bbS ^n ) \to \bbS ^n $ such that the $H_i$-action on the base space is trivial. By Theorem \ref{thm:classification} and by the argument in the proof of Lemma \ref{lem:jointrivial}, such a fibration is classified by 
a homotopy class $\alpha_i \in \pi _{n-1} (\Aut _{H_i} (\bbS(V_{H_i}) ))$.

By Theorem \ref{thm:finiteness of homotopy}, for each $H \in \Iso (B)$, there is an $m_{H} \geq 1$ such that $\pi _{n-1} (\Aut _{H} (\bbS (V_{H} ^{\oplus k} ))$ is finite for all $k \geq m_H$. Let $m=\max \{m_H \, |\, H \in \Iso (B)\}$. Then the group $$\pi _{n-1} (\Aut _{H} (\bbS (V_{H} ^{\oplus  m } ))$$ has finite order, say $d_H$, for all $H \in \Iso (B)$. Let $d=\prod _H d_H$. By Lemma \ref{lem:jointrivial}, the $H_i$-fibration $\ast _{dm} (q_{n,i} |_{\bbS ^n })$ is $H_i$-fiber homotopy equivalent to the trivial fibration for all $i \in I_{n+1}$. This implies that the $G$-fibration $p$  obtained by  the following pullback diagram
$$\xymatrix{
W \ar[r]^{f}\ar[d]^{p} & \ast _{dm} E_n  \ar[d]^{\underset{dm}{\ast}\, q_n} \\ \displaystyle \coprod _{i\in I_{n+1}}G/H_i\times \bbS ^{n}\ar[r]^-{\coprod   f_i} & B^{(n)}  }$$
is $G$-homotopy equivalent to the trivial fibration. Let $$\varphi :  \coprod _{i \in I_{n+1}} G \times _{H_i} \bbS (V_{H_i} ^{\oplus dm})\times \bbS ^n  \to W $$ be a $G$-fiber homotopy equivalence between the trivial fibration and $p$. We can use $\varphi$ to glue the cone of the trivial fibration and obtain a quasifibration
$$\xymatrix{\left( \displaystyle \coprod _{i\in I_{n+1}}G\times _{H_i} \bbS (V_{H_i} ^{\oplus dm})\times \bbD ^{n+1} \right)  \cup _{f \circ \varphi}\left( \underset{dm}{*}E_n\right)  \ar[rr] & &  B^{(n+1)} }.$$
There is a construction called gammafication that converts a quasifibration to a fibration and this construction also works for $G$-quasifibrations (see \cite[pg. 375]{waner1}). Applying gammafication to the above $G$-quasifibration, we obtain a $G$-equivariant spherical fibration $q_{n+1}:E_{n+1}\to B^{(n+1)}$ whose fiber type is $\{ \bbS (V_H ^{\oplus dm})\}$.
\end{proof}

\begin{remark} Another possible way of completing the final step of the above construction is to attach trivial $G$-fibrations over $(n+1)$-cells with the space $\ast _{dm} E_n$ using $G$-tubes (see \cite[Theorem 3.1]{guclukanpaper}). When these $G$-tubes are used one does not need the gammafication construction since one directly gets $G$-fibrations. This method is explained in detail in \cite{guclukanthesis} and \cite{guclukanpaper}.
\end{remark}

As a corollary of Proposition \ref{pro:mainconsttool}, we obtain the following which is also proved in \cite{klaus} as Proposition 2.7.

\begin{proposition}[Proposition 2.7, \cite{klaus}] Let $G$ be a finite group and $B$ be a finite dimensional $G$-CW-complex. Let $\{ V_H\}$ be a compatible family of complex representations over all $H \in \Iso (B)$. Then there exists an integer $k \geq 1$ and a $G$-equivariant spherical fibration $q:E\to B$ such that the fiber type of $q$ is $\{ \bbS (V_H ^{\oplus k} ) \}$.
\end{proposition}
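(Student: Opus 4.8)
The plan is to prove this final proposition as a direct corollary of Proposition \ref{pro:mainconsttool} by induction on the skeletal filtration of $B$. Since $B$ is a finite dimensional $G$-CW-complex, we may write $B = B^{(N)}$ for some $N$, where $B^{(n)}$ denotes the $n$-skeleton. The strategy is to build a $G$-fibration skeleton by skeleton, starting from a trivial fibration over the low skeleta and repeatedly applying Proposition \ref{pro:mainconsttool} to extend across each new dimension, keeping track of how the fiber type changes (it gets replaced by a direct-sum power $\{\bbS(V_H^{\oplus k})\}$ at each stage).

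First I would handle the base of the induction. Over $B^{(1)}$ (or more carefully, we want to start the application of Proposition \ref{pro:mainconsttool} at $n=2$, since that proposition requires $n\geq 2$), one can take the trivial $G$-fibration $q_2 : B^{(2)} \times \bbS(V_{?}) \to B^{(2)}$, where the fiber over a point $b$ is $\bbS(V_{G_b})$; the compatibility of the family $\{V_H\}$ is exactly what guarantees this is a well-defined $G$-fibration with fiber type $\{\bbS(V_H)\}$. The mild subtlety here is setting up the initial fibration on the $2$-skeleton (or checking the low-dimensional cases $n=0,1$ separately), but since the fiber type is prescribed by a compatible family of representations, a genuinely trivial bundle $G\times_H \bbS(V_H)$ glued over cells works, and no obstruction arises in these low dimensions.

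Next comes the inductive step, which is where the real content lives, though it is entirely packaged in the previous proposition. Suppose we have constructed a $G$-fibration $q_n : E_n \to B^{(n)}$ with fiber type $\{\bbS(V_H^{\oplus k_n})\}$ for some $k_n \geq 1$. Applying Proposition \ref{pro:mainconsttool} to the compatible family $\{V_H^{\oplus k_n}\}$ (which is again a compatible family) produces an integer $k \geq 1$ and a $G$-fibration $q_{n+1} : E_{n+1} \to B^{(n+1)}$ whose fiber type is $\{\bbS((V_H^{\oplus k_n})^{\oplus k})\} = \{\bbS(V_H^{\oplus k_n k})\}$. Setting $k_{n+1} = k_n k$ continues the induction. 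After finitely many steps we reach $n+1 = N$ and obtain $q = q_N : E \to B^{(N)} = B$ with fiber type $\{\bbS(V_H^{\oplus k})\}$ where $k = k_N$ is the accumulated product of the join-multiplicities.

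The main point to verify, and the only place requiring any care, is that at each stage the family being fed into Proposition \ref{pro:mainconsttool} remains a compatible family of complex representations: one checks that if $\{V_H\}$ is compatible then so is $\{V_H^{\oplus k}\}$, which is immediate since taking the $k$-fold direct sum commutes with restriction and with conjugation $c_g$, so the intertwining element $\gamma \in U(n)$ can be replaced by $\gamma^{\oplus k} \in U(nk)$ making the relevant square commute. I do not expect any genuine obstacle here, since the heavy lifting — the finiteness of the homotopy groups $\pi_{n-1}(\Aut_H(\bbS(V_H^{\oplus k})))$ from Theorem \ref{thm:finiteness of homotopy} and the killing of the classifying obstruction by fiber joins from Lemma \ref{lem:jointrivial} — has all been absorbed into Proposition \ref{pro:mainconsttool}. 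The finite dimensionality of $B$ is essential precisely to guarantee the induction terminates after finitely many skeleta and hence produces a single global $k$.
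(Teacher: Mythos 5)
Your inductive step is exactly the paper's: once a $G$-fibration with fiber type $\{\bbS(V_H^{\oplus k_n})\}$ exists over $B^{(n)}$ for some $n\geq 2$, repeated application of Proposition \ref{pro:mainconsttool} (plus the easy observation that compatibility is preserved under direct sums) finishes the proof, and your bookkeeping of the multiplicities is correct. The genuine gap is the base case, which you dismiss as a ``mild subtlety.'' First, there is no ``trivial $G$-fibration $B^{(2)}\times \bbS(V_?)$'': a compatible family $\{V_H\}$ need not consist of the restrictions of a single $G$-representation, so there is no global fiber to form a product with --- that is precisely the difficulty the proposition addresses. Second, the fallback of gluing trivial pieces $G\times_{H_i}\bbS(V_{H_i})\times \bbD^m$ cell by cell does work over $B^{(0)}$ and $B^{(1)}$ (fibrations over points and arcs are trivial), but it breaks down at the $2$-cells: to glue $G\times_{H_i}\bbS(V_{H_i})\times\bbD^2$ along an attaching map $G/H_i\times\bbS^1\to B^{(1)}$, you need the restriction of your fibration to the $H_i$-fixed attaching circle to be $H_i$-fiber homotopy trivial, and the obstruction is a monodromy class in $\pi_1(B\Aut_{H_i}(\bbS(V_{H_i})))\cong \pi_0(\Aut_{H_i}(\bbS(V_{H_i})))=\mathcal{E}_{H_i}(\bbS(V_{H_i}))$, which is in general nontrivial (already non-equivariantly it contains the degree $-1$ class). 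None of the tools you cite apply in this range: Proposition \ref{pro:mainconsttool} explicitly requires $n\geq 2$; Lemma \ref{lem:jointrivial} needs $n\geq 2$ so that the classifying map lifts to the simply connected space $B\Aut^I_H(F_H)$; Theorem \ref{thm:finiteness of homotopy} concerns $\pi_n$ for $n>0$; and the fiber-join trick of multiplying a class in a finite homotopy group by $k$ has no stated counterpart for this $\pi_0$-obstruction (indeed the paper's remark after Lemma \ref{lem:joinswap} warns that for non-free actions the swap maps need not be homotopic to the identity, which is exactly the kind of $\pi_0$-level subtlety at issue).

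The paper closes this gap by working with \emph{vector bundles} rather than spherical fibrations in low dimensions. It forms the compatible family $\mathbf{A}=(\rho_H)$ of representations realizing the $V_H$, takes the universal $G$-equivariant vector bundle $E_{\cH}(G,\mathbf{A})\to B_{\cH}(G,\mathbf{A})$ of \cite{unlu-yalcin2}, and uses that $B_{\cH}(G,\mathbf{A})^H=BC_{U(n)}(\rho_H)$ is simply connected (centralizers of unitary representations are products of unitary groups, hence connected), so equivariant obstruction theory produces a $G$-map $B^{(2)}\to B_{\cH}(G,\mathbf{A})$ with no obstruction. The sphere bundle of the pulled-back $G$-vector bundle is then a $G$-fibration over $B^{(2)}$ with fiber type exactly $\{\bbS(V_H)\}$, and only at that point does the induction via Proposition \ref{pro:mainconsttool} begin. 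You need this step (or some substitute argument showing the monodromy obstruction over $2$-cells can be killed) for your proof to be complete.
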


\begin{proof} Let $${\bf A}=(\rho_H) \in \lim _{\underset{H \in \cH}{\longleftarrow}} \Rep (H, U(n))$$ 
where $\cH =\Iso (B)$ and $\rho_H$ is a representation for $V_H$ for every $H \in \cH$. Let $q: E_{\cH } (G, {\bf A} )\to B _{\cH} (G, {\bf A})$ denote the universal $G$-equivariant vector bundle with fiber type ${\bf A}$ (see \cite[def. 2.4]{unlu-yalcin2}). Since $B_{\cH} (G, {\bf A} )^H=BC_{U(n)} (\rho_H)$ is simply connected for all $H \in \cH$, by standard obstruction theory there is a $G$-map $B^{(2)}\to B_{\cH} (G, {\bf A})$ (see the proof of Theorem 4.3 in \cite{unlu-yalcin2} for details). Pulling back the universal $G$-equivariant bundle via this map, we obtain a $G$-equivariant vector bundle over $B^{(2)}$. The sphere bundle of this bundle is spherical $G$-fibration over $B^{(2)}$ with fiber type $\{\bbS (V_H )\}$. Now the result follows from the repeated application of Proposition \ref{pro:mainconsttool}.
\end{proof}

We often want the total space of a $G$-fibration to be $G$-homotopy equivalent to a finite $G$-CW-complex. The following theorem gives a very useful criteria for this condition:

\begin{proposition}\label{pro:finiteness} Let $G$ be a finite group,  $B$ be a finite
$G$-CW-complex, and $p : E\to B$ be a $G$-fibration with fiber type $\{ F_H \}$. If $F_H$ is $H$-homotopy equivalent to a finite $H$-CW-complex for every $H \in \Iso (B)$, then $E$ is $G$-homotopy equivalent to a finite $G$-CW-complex.
\end{proposition}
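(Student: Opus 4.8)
The plan is to prove the statement by induction on the dimension of the finite $G$-CW-complex $B$, establishing the uniform claim that for every $G$-fibration $p:E\to B$ over a finite $G$-CW-complex $B$ with fiber type $\{F_H\}$, each $F_H$ being $H$-homotopy equivalent to a finite $H$-CW-complex, the total space $E$ is $G$-homotopy equivalent to a finite $G$-CW-complex. For the base case $\dim B=0$, the space $B$ is a disjoint union of orbits $G/K_j$, and over a single orbit a $G$-fibration is determined by the fiber over the base point, so $p^{-1}(G/K_j)\cong G\times_{K_j}F_{eK_j}$. By the fiber-type hypothesis $F_{eK_j}$ is $K_j$-homotopy equivalent to a finite $K_j$-CW-complex, and induction up to $G$ turns this into a finite $G$-CW-complex; hence $E$ is $G$-homotopy finite.

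For the inductive step, assume the claim for all finite $G$-CW-complexes of dimension $<n$, and let $\dim B=n$. Writing $B=B^{(n-1)}\cup(\coprod_i G/H_i\times\bbD^n)$ via the standard equivariant pushout, I would pull the fibration back over each corner. The key structural input is the equivariant gluing lemma: since $\coprod_i G/H_i\times\bbS^{n-1}\hookrightarrow\coprod_i G/H_i\times\bbD^n$ is a $G$-cofibration and $p$ is a $G$-fibration, the restriction $p^{-1}(\coprod_i G/H_i\times\bbS^{n-1})\hookrightarrow p^{-1}(\coprod_i G/H_i\times\bbD^n)$ is again a $G$-cofibration, and $E=p^{-1}(B)$ is the pushout of $E^{(n-1)}:=p^{-1}(B^{(n-1)})$ with $p^{-1}(\coprod_i G/H_i\times\bbD^n)$ along $p^{-1}(\coprod_i G/H_i\times\bbS^{n-1})$. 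Because one leg is a $G$-cofibration, this pushout is in fact a $G$-homotopy pushout.

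It then remains to recognize each corner as $G$-homotopy finite. The space $E^{(n-1)}$ is a total space over the $(n-1)$-dimensional complex $B^{(n-1)}$, with fiber type inherited from $\{F_H\}$ (here $\Iso(B^{(n-1)})\subseteq\Iso(B)$), hence $G$-homotopy finite by the inductive hypothesis; likewise $p^{-1}(\coprod_i G/H_i\times\bbS^{n-1})$, being the pullback of $E^{(n-1)}$ along the attaching map, is a total space over the $(n-1)$-dimensional complex $\coprod_i G/H_i\times\bbS^{n-1}$ whose fiber type is again $H$-homotopy finite by compatibility of $\{F_H\}$, so it too is $G$-homotopy finite by the inductive hypothesis. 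For the remaining corner, the inclusion $G/H_i\times\{0\}\hookrightarrow G/H_i\times\bbD^n$ is a $G$-homotopy equivalence, so restricting the fibration gives $p^{-1}(G/H_i\times\bbD^n)\simeq G\times_{H_i}F_{H_i}$, which is $G$-homotopy finite exactly as in the base case. Finally, since homotopy pushouts are invariant under $G$-homotopy equivalences of diagrams, I would replace each corner by a genuine finite $G$-CW model and the two legs by $G$-homotopic cellular maps (equivariant cellular approximation), and realize the homotopy pushout as the double mapping cylinder, which is patently a finite $G$-CW-complex. This closes the induction.

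The step I expect to be the main obstacle is the equivariant gluing lemma used above: that restriction of a $G$-fibration along a $G$-cofibration is again a $G$-cofibration, and correspondingly that the total space over a pushout of bases (along a cofibration) is the pushout of the total spaces. Non-equivariantly this is Str\o m's theorem, and its equivariant analogue can be obtained by verifying the relevant equivariant NDR and lifting conditions, checked fixed-point set by fixed-point set. Making this precise, together with the bookkeeping needed to pass from ``$H$-homotopy equivalent to a finite complex'' to an actual finite model compatible with the gluing diagram, is where the real work lies.
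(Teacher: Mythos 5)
Your overall strategy coincides with the paper's: induct over the skeleta of $B$, pull the fibration back over the equivariant pushout that attaches the top cells, use the equivariant Str\o m-type statement (this is \cite[Lemma 1.26]{lueck} in the paper's proof) to see that the total spaces form a $G$-pushout with one leg a $G$-cofibration, trivialize over the discs, and finish with the equivariant gluing lemma (\cite[Lemma 2.13]{lueck}) together with cellular approximation to replace the corners by finite models. The one step that does not work as written is your treatment of the sphere corner: you apply the inductive hypothesis to the pullback of the fibration along the attaching maps $\coprod_i G/H_i\times\bbS^{n-1}\to B^{(n-1)}$, justifying that this pullback again has a homotopy-finite fiber type ``by compatibility of $\{F_H\}$''. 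But compatibility (Definition \ref{def:compatiblefamily}) is \emph{not} a hypothesis of Proposition \ref{pro:finiteness}; it holds automatically only when the fixed sets $B^H$ are connected, which is not assumed. Without it, the pullback need not have a fiber type in the paper's sense at all: a point $(gH_i,x)$ of an attaching sphere has isotropy $gH_ig^{-1}$ in the pullback base, while its image $b=f_i(gH_i,x)$ can have strictly larger isotropy $G_b$, so the fiber there is $F_{G_b}$ viewed as a $gH_ig^{-1}$-space. Two points with the same pullback isotropy $K$ (for instance on attaching spheres of two different cells with the same isotropy group, landing in different components of $B^K$) have fibers $\Res_K F_{G_b}$ and $\Res_K F_{G_{b'}}$ with $G_b\neq G_{b'}$, and these need not be $K$-homotopy equivalent. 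So the uniform claim you are inducting on simply does not apply to that corner.

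The gap is genuine but repairable, in two standard ways. (a) Strengthen the statement you induct on to the pointwise condition ``every fiber $p^{-1}(b)$ is $G_b$-homotopy equivalent to a finite $G_b$-CW-complex''. Unlike the fiber-type condition, this is inherited by arbitrary pullbacks along $G$-maps, since for finite $G$ the restriction of a finite $H$-CW-complex to a subgroup $K\leq H$ is again a finite $K$-CW-complex; and it is implied by the hypotheses of the proposition, so it proves what you want. With this reformulation your argument goes through essentially verbatim. (b) Do what the paper does: instead of invoking the inductive hypothesis on the sphere corner, restrict the $G$-fiber homotopy trivialization $\gamma$ over $\coprod_i G/H_i\times\bbD^{n}$ to the boundary spheres. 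This identifies the sphere corner with $\coprod_i G\times_{H_i}F_{H_i}\times\bbS^{n-1}$, which is visibly homotopy finite, and has the further advantage that the square involving the disc corner then commutes strictly, so only the leg into $p^{-1}(B^{(n-1)})$ needs to be adjusted up to $G$-homotopy and made cellular before forming the finite pushout.
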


\begin{proof} We will prove this lemma by induction over the skeletons of $B$. We already know that $p^{-1}(B^{(0)})$ is $G$-homotopy equivalent to a finite $G$-CW-complex. Now assume that $p^{-1}(B^{(n)})$ is
$G$-homotopy equivalent to a finite $G$-CW-complex $Z$ for some $n\geq 0$. We want to show that $p^{-1}(B^{(n+1)})$ is $G$-homotopy equivalent to a finite $G$-CW-complex.   

The pushout diagram given in $(1)$ induces a diagram of $G$-spaces  
\begin{equation}\label{eqn:pushout2}
\vcenter{\xymatrix{f^*(E_n)
\ar[r]^-{\overline f}\ar[d]^{\overline j} & E_n  \ar[d]^{\overline J} \\ g^*  (E_{n+1})
\ar[r]^-{\overline g} & E_{n+1}}}\end{equation}
where the spaces in the diagram are the total spaces of the fibrations obtained by taking pullbacks of the fibration $q_{n+1}: E_{n+1} \to B^{(n+1)}$ via the maps $f$, $g$, $j$, and $J$. Here $f=\coprod f_i$, $g=\coprod g_i$, $$j: 
\coprod _{i\in I_{n+1}} G/H_i\times \bbS ^{n}\to \coprod _{i\in I_{n+1}} G/H_i\times \bbD ^{n+1}$$
is the disjoint union of inclusion maps, and $J: B^{(n)}\to B^{(n+1)}$ is the inclusion map. Since the inclusion map $\bbS ^n \to \bbD^{n+1}$ is a cofibration map, the $G$-map $j$ is a $G$-cofibration.
So, by \cite[Lemma 1.26]{lueck}, the diagram \eqref{eqn:pushout2} is a pushout diagram and $\overline j$ is a $G$-cofibration.

Since $\bbD ^{n+1}$ is contractible, there is a $G$-fiber
homotopy equivalence
$$\displaystyle \coprod _{i\in I_{n+1}} G\times_{H_i} F_{H_i} \times\bbD ^{n+1} \maprt{\gamma } g^* (E_{n+1}).$$
This gives a commutative diagram of the following form
$$\xymatrix{E_{n}& & f^* (E_n)  \ar[ll]_{\overline f}  \ar[rr]^{\overline j}  & & g^* (E_{n+1})  \\ 
E_n \ar[u]_{\id} & & \displaystyle \coprod _{i\in I_{n+1}} G\times_{H_i} F_{H_i} \times \bbS ^{n}\ar[u]_-{\gamma '} \ar[rr]^{\id \times j} \ar[ll]_-{\overline f \circ \gamma '} & & \displaystyle \coprod
_{i\in I_{n+1}} G\times_{H_i} F_{H_i} \times \bbD ^{n+1} \ar[u]_-{\gamma}}$$ where $\gamma '$ is the restriction of $\gamma$ to the boundary spheres.
Such a restriction makes sense since $\gamma$ is a fiber homotopy equivalence.  
Now, since both $\id \times j$ and $\overline j$ are $G$-cofibrations, by \cite[Lemma 2.13]{lueck}, the $G$-space $E_{n+1}$, which is the pushout of the the diagram in the first line, is $G$-homotopy equivalent to the pushout of the diagram in the second line. 

To find a further homotopy equivalence, note that by induction assumption $E_n$ is $G$-homotopy equivalent to a finite $G$-CW-complex $Z$. So, using a similar diagram as above, we can conclude that $E_{n+1}$ is $G$-homotopy equivalent to the pushout of a diagram of the following form
$$\xymatrix{Z & & \displaystyle \coprod _{i\in I_{n+1}} G\times_{H_i} F_{H_i} \times \bbS ^{n}  \ar[rr]^{\id \times j} \ar[ll]_-{\varphi} & & \displaystyle \coprod
_{i\in I_{n+1}} G\times_{H_i} F_{H_i} \times \bbD ^{n+1} }.$$ 
Now, we can replace the map $\varphi$ with a cellular one (up to homotopy) and conclude that $E_{n+1}$ is $G$-homotopy equivalent to a finite $G$-CW-complex since the spaces $Z$ and $F_{H_i}$ for all $i\in I_{n+1}$ are finite $G$-CW-complexes. This completes the $n$-th stage of our induction. Since $B$ is a finite $G$-CW-complex, the induction will stop in finite steps. So, the proof is complete.
\end{proof}

\section{Proof of the main theorem}
\label{sect:mainthm}

Now, we are ready to prove the main theorem of the paper. First we introduce some notation and recall some basic facts about Stiefel manifolds. For more details, we refer the reader to \cite{unlu-yalcin1}.

Let $F$ denote the field of real numbers
$\bbR$, complex numbers $\bbC$, or quaternions $\bbH$. For a real
number the conjugation is defined by $\overline x =x$, for a complex
number $x=a+ib$ by $\overline x=a-ib$, and for a quaternion
$x=a+ib+jc+kd$ by $\overline x=a-ib-jc-kd$. On the vector space
$F^n$, we can define an inner product $(v,w)$ by taking
$$(v,w)=v_1 \overline w_1+ v_2 \overline w_2 + \cdots + v_n \overline w_n.$$ The Stiefel manifold
$V_k (F ^n )$ is defined as the subspace of $F^{nk}$ formed
by the $k$-tuples of vectors $(v_1, v_2, \dots, v_k)$ such that $v_i
\in F^n$ and for every pair $(i,j)$, we have
$(v_i , v_j )=1$ if $i=j$ and zero otherwise. 

There is a sequence of fiber bundles
$$V_n (F^n)\to \cdots \to V_{k+1} (F^n ) \maprt{q_k} V_k (F^n) \to
\cdots \to V_2 (F^n) \maprt{q_1} V_1 (F^n ) $$ where the map $q_k: V_{k+1}
(F^n ) \to V_{k} (F^n) $ is defined by $q_k(v_1, \dots, v_{k+1}
)=(v_1,\dots, v_k )$ and the fiber of $q_k$ is $V_1
(F^{n-k})=\bbS^{c(n-k)-1}$ where $c=\dim _{\bbR} F $
(see Theorem 3.8
and Corollary 3.9 in Chapter 8 of \cite{husemoller}).
Note that the
sphere bundle $q_k: V_{k+1}(F^n) \to V_k (F^n )$ is the sphere
bundle of the vector bundle $\overline q_k: \overline V_{k+1} (F^n )
\to V_k (F^n)$ where $\overline V_{k+1} (F^n)$ is the space formed
by $(k+1)$-tuples $(v_1, \dots, v_{k+1})$ satisfying $(v_1,\dots, v_k)\in
V_k (F^n)$ and $(v_i, v_{k+1})=0$ for all $i=1, \dots, k$.

Note that if a finite group $G$ has a representation $W$ over a field $F$, then the inner product above can be replaced by a $G$-invariant one and the Stiefel manifolds have natural $G$-actions. Moreover the sphere bundles given above become $G$-equivariant bundles. If the representation $W$ has fixity $f$, then we have $\dim _{F} W^g \leq f$ for all $g \in G$. This means if $W$ is a faithful representation, i.e., $f \leq \dim _F W-1$, then the $G$-action on $V_{f+1} (W)$ is free.
We will be using this observation in the proof of the main theorem.

Also observe that if $G$ has a complex representation with fixity $f$, then by tensoring it with $\bbH$ over $\bbC$, we obtain a symplectic representation with the same fixity. So to prove Theorem \ref{thm:main}, it is enough to prove the following:

\begin{theorem}\label{thm:mainspversion}
Let $\rho: G\to Sp(n)$ be a faithful symplectic representation with fixity $f$. Then there exists a finite $G$-CW-complex $X$ homotopy equivalent to a product of $f+1$ spheres such that the $G$ action the homology of $X$ is trivial.
\end{theorem}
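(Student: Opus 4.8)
The plan is to realize, up to homotopy and with homologically trivial $G$-action, the filtration of the Stiefel manifold $V_{f+1}(W)$ by the sphere bundles $q_k:V_{k+1}(W)\to V_k(W)$, but with the twisting replaced at each stage by a fibration that fiber joins to a non-equivariantly trivial one. Write $F=\bbH$, $c=4$, and for $H\le G$ put $d_H=\dim_F W^H$; since $\rho$ is faithful of fixity $f$, we have $d_H\le f$ for every $H\ne 1$ and $d_1=n\ge f+1$. I would construct, by induction on $k=0,1,\dots,f$, a finite $G$-CW-complex $X_k$ such that (a) $X_k$ is homotopy equivalent to a product of $k+1$ spheres, (b) the $G$-action on $H_*(X_k)$ is trivial, (c) $\Iso(X_k)\subseteq\{H:d_H\ge k+1\}$, and (d) $X_k^H$ is path connected and simply connected for every $H\in\Iso(X_k)$. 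For $k=f$, condition (c) forces the action to be free, and (a), (b) then give exactly Theorem \ref{thm:mainspversion}. The base case is $X_0=\bbS(W)$ with the linear $Sp(n)$-action: its fixed sets $\bbS(W)^H=\bbS(W^H)=\bbS^{4d_H-1}$ are simply connected when $d_H\ge 1$, and the action is homologically trivial because $Sp(n)$ is connected, so each element of $G$ acts by a map homotopic to the identity.

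For the inductive step, over the base $X_k$ I would use the compatible family of symplectic representations $\{V_H\}_{H\in\Iso(X_k)}$ with $V_H=W\ominus T_H$, where $T_H\subseteq W^H$ is a trivial $H$-subrepresentation of $F$-dimension $k+1$ (which exists since $d_H\ge k+1$). These are precisely the fiber representations of the tautological bundle $\overline V_{k+2}(W)\to V_{k+1}(W)$, whose fixed point sets $V_{k+1}(W^H)$ are connected, so $\{V_H\}$ is compatible in the sense of Definition \ref{def:compatiblefamily}. The decisive computation is that for $K\le H$ one has $(V_H)^K=W^K\ominus T_H$, so $\dim_F (V_H)^K=d_K-(k+1)$; thus $\bbS(V_H)^K\ne\emptyset$ exactly when $d_K\ge k+2$, which is what makes the fibers cut the isotropy down to $\{H:d_H\ge k+2\}$. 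Applying the corollary to Proposition \ref{pro:mainconsttool} I obtain a spherical $G$-fibration $E\to X_k$ with fiber type $\{\bbS(V_H^{\oplus m})\}$; by Proposition \ref{pro:finiteness}, $E$ is $G$-homotopy equivalent to a finite $G$-CW-complex, and by the computation above $\Iso(E)\subseteq\{H:d_H\ge k+2\}$. The non-equivariant fiber over a free point is $\bbS(V_1^{\oplus m})=\bbS^{4m(n-k-1)-1}$, a sphere of dimension $\ge 3$ since $k\le f-1\le n-2$.

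Next I would trivialize non-equivariantly by fiber joins, exactly as in Lemma \ref{lem:jointrivial} and Lemma \ref{lem:joinswap}: because $X_k$ is simply connected and the relevant homotopy groups are finite by Theorem \ref{thm:finiteness of homotopy}, a sufficiently large fiber join of $E$ is non-equivariantly fiber homotopy trivial, giving a finite $G$-CW-complex $X_{k+1}$ with $X_{k+1}\homeo X_k\times\bbS^{N_k}$ non-equivariantly and with the same (reduced) isotropy. Taking fixed points, $X_{k+1}^H\to X_k^H$ is a fibration whose base and fiber $\bbS((V_H^{\oplus\cdot})^H)$ are simply connected, so (d) persists, and $X_{k+1}\homeo X_k\times\bbS^{N_k}$ is a product of $k+2$ spheres, giving (a). For the homological triviality (b), I would combine the Künneth isomorphism $H_*(X_{k+1})\iso H_*(X_k)\otimes H_*(\bbS^{N_k})$ with the observation that the action is trivial on $H_*(X_k)$ by induction and trivial on the fiber homology because the fibers are spheres of symplectic representations and all structure maps of the compatible family are conjugations by elements of $Sp(n)$, hence orientation preserving; this yields a $G$-invariant fiberwise orientation. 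As $X_k$ is simply connected, the Serre spectral sequence then collapses with untwisted coefficients and trivial $G$-action, giving (b).

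The step I expect to be the main obstacle is keeping these two bookkeeping requirements synchronized through the whole recursion: ensuring, on one hand, that the fixed point spaces $X_k^H$ remain path connected and simple so that the hypotheses of the equivariant Federer spectral sequence and of Proposition \ref{pro:mainconsttool} stay in force, and on the other hand that the $G$-action on homology stays trivial after each fiber join. The symplectic setting is what keeps both under control: all fixed point spheres have dimension $\equiv 3\pmod 4$, so each $\bbS(V_H)$ is a homotopy representation with odd fixed spheres matching the hypotheses of the fiber-join lemmas, while connectedness of $Sp(n)$ makes the swap maps of Lemma \ref{lem:joinswap} homotopic to the identity and the fibrations orientable. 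Assembling the $f$ steps starting from $X_0=\bbS(W)$ then yields the free, homologically trivial action on a finite complex homotopy equivalent to a product of $f+1$ spheres.
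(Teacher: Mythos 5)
Your construction is, modulo reindexing, the paper's own proof: the same recursion starting from $\bbS(W)$, the same compatible family (your $V_H=W\ominus T_H$ is exactly the fiber representation $(\overline q_i)^{-1}(b)$, $b\in V_i(W)^H$, used in the paper), the same isotropy bookkeeping via fixity, and the same tools (Proposition \ref{pro:mainconsttool} and its corollary, Proposition \ref{pro:finiteness}, Theorem \ref{thm:finiteness of homotopy}, fiber joins). The only cosmetic difference is where the fibration over the $2$-skeleton comes from: you invoke the universal equivariant bundle inside the corollary, while the paper produces a $G$-map $X_i^{(2)}\to V_i(W)$ by obstruction theory (using that the fixed sets of Stiefel manifolds are empty or simply connected) and pulls back $q_i$; these are interchangeable.

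However, your justification of the homological triviality (b) has a genuine gap. You claim a $G$-invariant fiberwise orientation exists because ``all structure maps of the compatible family are conjugations by elements of $Sp(n)$, hence orientation preserving.'' The compatibility data only controls the $G_b$-homotopy type of each fiber; it says nothing about the maps $g\colon F_b\to F_{gb}$ of the fibration you actually build, which arise from chosen $G$-fiber homotopy equivalences in the gluing, gammafication, and join steps, not from linear maps. Concretely, fix a non-equivariant trivialization and consistent generators $u_b\in H^N(F_b)$; then $g$ carries $u_{gb}$ to $\epsilon(g)\,u_b$, where $\epsilon\colon G\to\{\pm1\}$ is a homomorphism. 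The symplectic structure forces $\epsilon(g)=+1$ only when $g$ lies in some isotropy subgroup of the base, since there $g$ acts on $F_b$ by a map $G_b$-homotopic to a unitary one. For $g$ lying in no isotropy group there is no constraint at all: for instance, if $G=\bbZ/2$ acts freely on $B$, then $g\cdot(b,x)=(gb,r(x))$ on $B\times\bbS^N$, with $r$ a reflection, is a perfectly good $G$-fibration with the correct (free) fiber type and $\epsilon(g)=-1$. Since the entire point of your recursion is that $G$ has elements outside all isotropy groups of the base, this case cannot be excluded by symplectic linearity. The paper's Lemma \ref{lem:homtrivial} addresses exactly this: it observes that a chosen generator $u$ may indeed go to $-u$, and kills the sign by taking one further fiber join of the fibration with itself, since degrees multiply under joins and $(-1)^2=+1$. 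Your proof is repaired by replacing the ``automatic orientation'' claim with this extra self-join (or simply by citing Lemma \ref{lem:homtrivial}); everything else in your outline goes through.
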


\begin{proof} Let $W$ denote the $\bbH$-space corresponding to the representation $\rho$. Define $X_1=V_1(W)$. We will construct finite $G$-CW-complexes $X_2,
X_3,\dots,X_{f+1}$ recursively. For all $i$, the $G$-CW-complex $X_i$ will be homotopy equivalent to a product of $i$ spheres and will satisfy the following property: If $H\in \Iso(X_i)$, then $V_i(W)^H \neq \emptyset$ where $\Iso(X_i)$ denotes the set of isotropy subgroups of $X_i$. 

Assume that $X_i$ is constructed
for some $i\geq 1$. Note that for every $H \leq G$, the fixed point set $V_i(W)^H$ is either empty or simply connected. Since for every $H \in \Iso (X_i)$ we have $V(W_i )^H \neq \emptyset$, by standard equivariant obstruction theory there exists a $G$-map $f: X^{(2)}_i \to V_i(W)$.
By pulling back the $G$-bundle $q_i : V_{i+1} (W) \to V_i (W)$ via $f$, we obtain a $G$-equivariant vector bundle over $X_i^{(2)}$ with fiber type $\{ S(V_H )\}$. Note that this is a compatible family defined over all $H \in \Iso (X_i)$ where $V_H$ is the $H$-space $(\overline q_i) ^{-1} (b)$ for some $b \in V_i (W)^H$. 

Now, applying Proposition \ref{pro:mainconsttool} repeatedly to this $G$-fibration, we obtain a spherical $G$-fibration $E_i \to X_i$ with fiber type $\{ S(V_H ^{\oplus k})\}$ for some $k \geq 1$. By taking further fiber
joins, we can assume that $E_n$ is a trivial fibration non-equivariantly and the action on the homology of the total space is trivial. This is shown below in Lemma \ref{lem:homtrivial}. Now, by Proposition \ref{pro:finiteness},  $E_i$
is $G$-homotopy equivalent to
a finite $G$-CW-complex $Y$. Hence we can take $X_{i+1}$ as $Y$ and continue the induction until we reach $X_{f+1}$. At this stage, we have $(V_{f+1} (W))^H \neq \emptyset$ implies $H=\{1\}$, so we can conclude that the $G$-action on $X_{f+1}$ is free.
\end{proof}

\begin{lemma}\label{lem:homtrivial} Let $p: E \to B$ be a $G$-fibration over a finite $G$-CW-complex $B$ and $n$ be a positive integer. Suppose that $p$ has fiber type $\{ F_H \}$ such that $F_H $ is homotopy equivalent to the sphere $\bbS^n$ for all $H \in \Iso (B)$. Then, there is an integer $k\geq 1$ such that $\ast _k p : \ast _k E \to B$ is non-equivariantly homotopy equivalent to the trivial fibration. Moreover, if the $G$-action on the cohomology of $B$ is  trivial, then we can choose $k$ large enough so that the $G$-action on the cohomology of $E$ is also trivial. 
\end{lemma}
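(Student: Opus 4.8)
The plan is to treat the two assertions separately, first arranging non-equivariant triviality and then using the extra freedom in $k$ to control the $G$-action on cohomology.

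For the first assertion I would forget the $G$-action entirely and regard $p\colon E\to B$ as an ordinary Hurewicz fibration over the finite complex $B$ with fiber $\bbS^n$, classified (in the spirit of Theorem \ref{thm:classification}, but non-equivariantly, by Stasheff \cite{stasheff}) by a map $B\to B\Aut(\bbS^n)$. The $k$-fold fiber join $\ast_k p$ then has fiber $\ast_k\bbS^n\homeo\bbS^{N}$ with $N=k(n+1)-1$, classified by a map $B\to B\Aut(\bbS^{N})$. The key input is that the fiber join operation on spherical fibrations is precisely the addition on the stable classifying space $BG=\operatorname{colim}_m B\Aut(\bbS^{m-1})$, just as Whitney sum of vector bundles induces fiber join of their sphere bundles. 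Thus the stable class of $\ast_k p$ equals $k$ times the stable class of $p$ in the abelian group $[B,BG]$. Since $B$ is finite and $\pi_j(BG)$ is finite for every $j\ge 1$ (it is $\bbZ/2$ for $j=1$ and a finite stable homotopy group of spheres for $j\ge 2$), the group $[B,BG]$ is finite. Choosing $k$ to be a multiple of its exponent kills the class, so $\ast_k p$ is stably fiber homotopy trivial; taking $k$ also large enough that $N>\dim B$ puts us in the stable range, where $[B,B\Aut(\bbS^{N})]\to[B,BG]$ is a bijection, and hence $\ast_k p$ itself is non-equivariantly fiber homotopy equivalent to the trivial fibration $B\times\bbS^{N}\to B$.

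For the second assertion I would reinstate the $G$-action and study the $G$-equivariant Serre spectral sequence of $\ast_k p$. Because $\ast_k p$ is non-equivariantly trivial its Euler class vanishes, so the spectral sequence collapses and $H^*(\ast_k E)\iso H^*(B)\oplus H^{*-N}(B)$; moreover each $E_\infty$-term is, as a $G$-module, a tensor product of $H^*(B)$ (trivial $G$-action, by hypothesis) with the fiber orientation module $H^N(\ast_k\bbS^n)$. The $G$-action on this last module is the orientation character $\varepsilon\colon G\to\{\pm1\}$, and here the join is decisive: each of the $k$ join factors contributes the same character $\eta\colon G\to\{\pm1\}$ coming from the $\bbS^n$-fiber of $p$, and since the degree of a join of maps is the product of their degrees, one gets $\varepsilon=\eta^{\,k}$. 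Hence if $k$ is chosen \emph{even}, the orientation character is trivial and every $E_\infty$-term carries the trivial $G$-action.

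This last point only gives triviality on the associated graded, and the remaining, genuinely delicate step is to rule out a nontrivial extension, which over $\bbZ$ can occur (a finite-order unipotent automorphism of a group with torsion need not be the identity); I expect this to be the main obstacle. I would dispose of it by again exploiting the freedom in $k$: choosing $k$ large enough that $N=k(n+1)-1>\dim B$ forces $H^N(B)=0$, so in the collapsed spectral sequence the restriction $H^N(\ast_k E)\to H^N(\bbS^N)$ to a fiber is an isomorphism and the class $u\in H^N(\ast_k E)$ mapping to the fiber generator is unique. For $k$ even the orientation character is trivial, so $g^*u$ and $u$ restrict to the same generator on each fiber, whence $g^*u-u\in\Image(p^*)=p^*H^N(B)=0$; thus $u$ is $G$-invariant. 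Then $H^*(\ast_k E)=p^*H^*(B)\oplus u\cdot p^*H^*(B)$ is a $G$-equivariant splitting on which $G$ acts trivially, since both the action on $H^*(B)$ and the class $u$ are fixed. Taking $k$ to be an even multiple of the exponent of $[B,BG]$ with $k(n+1)-1>\dim B$ therefore establishes both conclusions simultaneously, and it is precisely the ability to push the fiber dimension above $\dim B$ that resolves the extension problem.
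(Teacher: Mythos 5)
Your proposal is correct, and for the main (``moreover'') assertion it follows the same strategy as the paper: regard the Serre spectral sequence of $\ast_k p$ as a spectral sequence of $\bbZ G$-modules, kill the $\pm 1$ orientation character on fiber cohomology by forcing $k$ to be even (the paper phrases this as joining $p$ with itself), and take $k$ large enough that the fiber dimension $N$ exceeds $\dim B$. The differences lie in the two endgames. For the non-equivariant triviality the paper merely cites the finiteness of the homotopy groups of $\Aut(\bbS^n)$ as well-known, whereas you give an actual argument through the classifying space for stable spherical fibrations: fiber join becomes addition in the finite abelian group $[B,\operatorname{colim}_m B\Aut(\bbS^{m-1})]$, a suitable multiple of the class dies, and the stable range $N>\dim B$ transfers triviality back to the unstable fibration. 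This is a genuine improvement in completeness, since $[B,B\Aut(\bbS^n)]$ is only a set, not a group, so passing to the stable object is exactly what makes ``multiply by $k$'' meaningful over a general finite $B$. For the equivariant statement, the paper identifies $E_2^{0,N}=H^0(B;H^N(F))$ as a trivial $\bbZ G$-module by a cochain-level computation and then invokes the multiplicative structure of the spectral sequence, while you instead produce a $G$-invariant Leray--Hirsch class $u$ and split $H^*(\ast_k E)=p^*H^*(B)\oplus u\cdot p^*H^*(B)$ equivariantly; both are valid. One remark: the extension problem you single out as the ``main obstacle'' is in fact vacuous once $N>\dim B$, because then each total degree contains at most one nonzero filtration quotient ($E_\infty^{i,0}$ for $i<N$ and $E_\infty^{i-N,N}$ for $i\ge N$), which is how the paper silently avoids the issue; your invariant class $u$ resolves it too, just less economically.
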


\begin{proof} The first part of the lemma is well-known and it follows from the fact that the homotopy groups of $\Aut (S^n)$
is finite. For the second part, observe that since the resulting fibration is homotopy equivalent to the trivial fibration, it is in particular an orientable fibration, i.e., $\pi _1 (B)$ action on the homology of $F$ is trivial. So, there exists a consistent choice of generators for $H^n (F_b )\cong \bbZ$ for all $b \in B$. Note that this gives a $G$-action on the cohomology of the fibers $H^n (F_b)$ which is defined by $$g^* : H^n (F_b) \to H^n (F_{gb})\cong H^n (F_b)$$ where the isomorphism on the right comes from identifications of generators that we have chosen. Observe that this action in general can be nontrivial since a generator $u$ can go to $-u$ but if we take the fiber join of $p$ with itself, then we can assume that this action is trivial for all $b \in B$.

For an orientable spherical fibration there is a Serre spectral sequence   $$E_2 ^{p,q} =H^p (B, H^q (F )) \Longrightarrow H^{p+q} (E).$$
Note that for a $G$-fibration, all the terms in this spectral 
sequence will be $\bbZ G$-modules and the differentials will be $\bbZ G$-module homomorphisms since every $g \in G$ induces a fiber preserving continuous map $g: E \to E$. In our case, we have a two line spectral sequence and it is easy to see that by choosing $k$ large enough, we can assume that $N=\dim (\ast _k \bbS ^n) \geq \dim B$, and hence we can conclude that $H^i (E) \cong H^i (B)$ for $i<N$ and $H^i (E) \cong H^{i-N} (B, H^N( F) )$ for $i \geq N$. Since $G$ acts trivially on $H^* (B)$, we have trivial action on $H^*(E)$ if the $G$-action on $H^{i-N} (B, H^N(F))$ is trivial for all $i\geq N$. Note that the Serre spectral sequence has a product structure, so the action on $H^* (B, H^N (F))$ is trivial if the $G$-action on $H^0 (B, H^N (F))$ is trivial. Note that $H^0 (B, H^n (F))$ is the kernel of the map
$$ d^1 : \Hom _{\bbZ} (C_0 (B), H^N (F)) \to \Hom _{\bbZ} (C_1 (B), H^N (F))$$
and as a $\bbZ G$-module 
$$\Hom _{\bbZ } (C_i (B), H^N (F)) \cong \bigoplus \limits _{\sigma \in I_i} \Hom _{\bbZ} (\ind _{G_{\sigma}} ^G \bbZ _{\sigma}, H^N (F_{\sigma} ))$$
for $i=0,1$ where the $G$-action on $H^N(F_{\sigma})$ is the one described above. Since we assumed that this action is trivial, we can conclude that $H^0 (B, H^N (F) )\cong H^0 (B)$ as $\bbZ G$-modules, and hence $H^0 (B, H^{N} (F))$  is a trivial $\bbZ G$-module.
This completes the proof of the lemma.

\end{proof}

\acknowledgement{We thank Natalia Castellana
for directing us towards M\o ller's work on equivariant Federer spectral sequence for calculating the homotopy groups of the monoid $\Aut _H (F_H)$.}

\providecommand{\bysame}{\leavevmode\hbox
to3em{\hrulefill}\thinspace}
\providecommand{\MR}{\relax\ifhmode\unskip\space\fi MR }
\providecommand{\MRhref}[2]{%
  \href{http://www.ams.org/mathscinet-getitem?mr=#1}{#2}
} \providecommand{\href}[2]{#2}

\end{document}